\documentclass[12pt,oneside]{amsart} 
\usepackage{amssymb,amscd}
\usepackage{euscript}

      \theoremstyle{plain}
      \newtheorem{theorem}{Theorem}[section]
      \newtheorem{lemma}[theorem]{Lemma}
      
      \newtheorem{proposition}[theorem]{Proposition}

\numberwithin{equation}{section}

      \makeatletter
      \def\@setcopyright{}
      \def\serieslogo@{}
      \makeatother

\def\A{\EuScript{A}} 
\def\B{\EuScript{B}} 
\def\tB{\tilde{\EuScript{B}} }
\def\n{\mathcal N}
\def\M{\mathcal M}

\def\R{\mathbb R}
\def\Z{\mathbb Z}
\def\N{\mathbb N}
\def\dist{\text{dist}}
\def\Id{\text{Id}}
\def\e{\epsilon}
\def\a{\alpha}

\def\la{\lambda}

\def\tg{\tilde g}

\def\drm{\mbox{Diff}^{\,r}(\M)}
\def\dqm{\mbox{Diff}^{\,q}(\M)}
\def\dtm{\mbox{Diff}^{\,t}(\M)}

\def\donem{\mbox{Diff}^{\,1}(\M)}
\def\dzm{\mbox{Homeo}(\M)}
\def\dkm{\mbox{Diff}^{\,k+1}(\M)}

\begin{document}

\author{Victoria Sadovskaya$^{\ast}$}

\address{Department of Mathematics, The Pennsylvania State University, 
University Park, PA 16802, USA.}
\email{sadovskaya@psu.edu}

\title[Periodic data conjugacy for diffeomorphism cocycles]
{Periodic data conjugacy \\ for diffeomorphism cocycles} 

\thanks{$^{\ast}$ Supported in part by Simons Foundation grant MP-TSM-00002874}
\thanks{{\it Mathematical subject classification:}\, 37D20, 37C15}
\thanks{{\it Keywords:}\, Cocycle, conjugacy,  hyperbolic system, diffeomorphism group, periodic orbit.}


\begin{abstract} 

We consider H\"older continuous cocycles over a hyperbolic dynamical system 
with values in the group of diffeomorphisms of a compact manifold. We focus on establishing
conjugacy between two cocycles from conjugacy of their periodic data. We obtain such a result
under bunching, or domination, assumption on the cocycles and a mild regularity assumption in 
$C^0$ topology on the maps conjugating the periodic data. We also obtain conjugacy to a constant 
cocycle assuming boundedness in $C^1$ or pre-compactness in $C^0$ for the constant cocycle and the
 conjugating maps.

\end{abstract}

\maketitle 

\section{Introduction and statements of the results}

In this paper we consider diffeomorphism-valued cocycles over a hyperbolic dynamical system. 
We focus on the problem of obtaining conjugacy between cocycles from conjugacy of their periodic data.

By a hyperbolic system we mean either a transitive 
Anosov diffeomorphism of a compact connected manifold, or a topologically mixing  
diffeomorphism of a locally maximal 
hyperbolic set, or a mixing subshift of finite type. We study
H\"older continuous cocycles  with values in the group $\dqm$ of $C^q$ diffeomorphisms of a compact 
connected manifold $\M$. H\"older  regularity is needed to obtain results even for scalar cocycles,  and it
allows symbolic systems in the base, which correspond to
random and Markovian sequences of diffeomorphisms.
\vskip.1cm

Let $(X,f)$ be a hyperbolic system 
and let $A:X\to\dqm$. The $\dqm$-valued {\em cocycle} over $f$ generated by $A$ 
is the map 
 $$
 \begin{aligned}
 &\A:\,X \times \Z \,\to \dqm \;\text{ defined  by }\; \A(x,0)=\A_x^0=\Id,\, \text{ and for }n\in \N,\\
&\A(x,n)=\A_x^n = A(f^{n-1} x)\circ \cdots \circ A(x) \quad\text{and}\quad\,
\A(x,-n)=\A_x^{-n}= (\A_{f^{-n} x}^n)^{-1} .
\end{aligned}
$$
We say that a cocycle $\A$  is  {\it H\"older continuous} or {\it bounded} if the property holds for its generator $A(x)=\A_x$.
\vskip.1cm
A function $\Psi:X\to\dtm $ is called a {\it conjugacy} between cocycles $\A$ and $\B$ if
$$
 \A_x=\Psi_{fx}\circ \B_x \circ \Psi_x^{-1} \quad \text{for every $x\in X$}.
$$ 
Clearly, conjugacy of the cocycles implies {\it conjugacy of their periodic data}, which means that  
for every $p=f^n(p)$ there exists a diffeomorphism $\Phi_p$ such that 
$$
\A^n_p=\Phi_p\circ \B_p^n \circ \Phi_p^{-1}.
$$

It is natural to ask whether conjugacy of the periodic data implies conjugacy of the cocycles.
This question for H\"older continuous group-valued cocycles has been extensively studied starting with the work of Liv\v{s}ic \cite{Liv1, Liv2}. In these papers, a positive answer was obtained for cocycles with values in $\R$ and in abelian groups. 
For such groups the question reduces to the case when $\B_x=\Id$, and 
periodic data conjugacy becomes the equality $\A^n_p=\Id$. 
The non-abelian case is much more difficult.
The case when $\B=\Id$ has been studied the most and by now is 
resolved for groups with bi-invariant metrics \cite{Liv2}, $GL(d,\R)$ \cite{K11}, Banach rings \cite{GG}, 
and groups  of diffeomorphism of a compact manifold in low  dimension \cite{KP} and in arbitrary  dimension \cite{AKL}.

For non-abelian groups, the problem for two arbitrary cocycles does 
not reduce to the special case $\B=\Id$. In  general  the answer is negative even for
$GL(2,\R)$-valued cocycles with bounded periodic data conjugacy  \cite{S13}. Therefore some extra assumptions on the cocycles or on the maps $\Phi_p$ are needed. The case of a stronger assumption of equality 
of the periodic data $\A^n_p=\B_p^n$, that is, $\Phi_p=\Id$, is better understood. Cocycles in compact groups 
and cocycles with ``bounded distortion" were studied by Parry \cite{Pa99} and Schmidt  \cite{Sch},
who obtained positive answers for the equality case and some results for the conjugacy case. 
The most general results for $GL(d,\R)$ were established in \cite{S15} 
for fiber bunched cocycles and H\"older continuity of $\Phi_p$ at one point, 
and in \cite{KS26} with no or weaker assumptions of $\Phi_p$ for broad classes of constant cocycles $\B$. 
 \vskip.1cm

We consider  cocycles with values in a group of diffeomorphism of a compact manifold. In this setting,  the only result so far was obtained in \cite{BK} for cocycles 
with {\it equal}\, periodic data satisfying a certain bunching, or domination, assumption. This assumption means that the growth of the cocycle is dominated by the hyperbolicity in the base. It is used in most results for non-commutative cocycles to obtain convergence
of certain iterates of the cocycle, and it yields existence of cocycle holonomies. 
Unlike  \cite{BK}, we study cocycles with {\it conjugate}\, periodic data.  For general bunched cocycles $\A$ and $\B$, we construct a conjugacy under the assumption of H\"older continuity of $\Phi_p$ at one point in $C^0$ topology.  
We also obtain a result for a bunched cocycle $\A$ and a constant cocycle $\B$ under 
certain pre-compactness assumptions on $\B$ and $\Phi_p$ in $C^0$ topology.
Our assumptions on $\Phi_p$ are much weaker than equality of the periodic data.

A particular difficulty in the diffeomorphism case is that the composition operation in $\dqm$ 
is not continuous in $C^q$ topology. This results in loss of regularity, which is two derivatives in  \cite{BK}.
In this paper, we obtain conjugating diffeomorphisms $\Psi_x$ almost as regular as the cocycles.

\vskip.1cm

Now we formulate the  results. We denote the space of homeomorphisms of $\M$ by $\dzm$ and the distance between homeomorphisms by $d_0$.
We  denote the usual $C^r$ norm adapted to the manifold setting by $| . |_{C^r}$, see Section \ref{Crdist}, where a distance $d_{C^r}$ between diffeomorphisms is also defined. In the results below, 
 $0<\la<1$ is so that $\la$ and $\la^{-1}$ are guaranteed contraction and expansion rates in the base system $(X,f)$.

\begin{theorem} \label{conjugate  data}
Let $(X,f)$ be a hyperbolic system and let $\M$ be a compact connected manifold.   
Let $q=k+\gamma$, where $k\in \N=\{1,2, \dots\}$ and $0<\gamma<1$,  and
let $k<r<q$.
\vskip.1cm

Let $\A$ and $\B$ be $\beta$-H\"older continuous  $\dqm$-valued cocycles over $(X,f)$ 
satisfying the following  bunching condition with $\la$ as in \eqref{Anosov def}:
\begin{equation}\label{growth}
\begin{aligned}
 &\text{there exist $K>0$ and $\sigma>1$ such that  $\sigma^{2(1+ r)} \la^{\beta(q-r)}<1$,\, and}\\
 & |\A_x^n|_{C^r} \le K \sigma^n \;\text{ and}\quad |\B_x^n|_{C^r} \le K \sigma^n
\quad\text{for all }x\in X \text{ and }n\in \N.
\end{aligned}
\end{equation}
Suppose that  for each periodic point $p=f^n(p)$ there exists 
$\Phi_p\in \dzm$ such that 
\begin{equation}\label{per conj}
\A^n_p=\Phi_p\circ \B_p^n \circ \Phi_p^{-1},
\end{equation}
and for some periodic point $p_0$ we have   $\Phi_{p_0} \in \dqm$ and for some constant $c$
\begin{equation}\label{Hold at p_0}
d_{C^0}(\Phi_p, \Phi_{p_0}) 
 \le c\, \dist (p,p_0)^\beta \quad\text{for every periodic point $p$ of $f$ close to $p_0$.}
\end{equation}

Then for any $t$ such that  $k<t<r$ there exists a H\"older continuous conjugacy 
$\Psi:X\to \dtm$  between $\A$ and $\B$  with $\Psi_{p_0}= \Phi_{p_0}$.
\end{theorem}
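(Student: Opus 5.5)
The plan is to follow the standard scheme for bunched cocycles with conjugate periodic data, as in \cite{S15} for $GL(d,\R)$, adapted to diffeomorphism groups. The first step is to produce holonomies. Using the bunching condition \eqref{growth} and interpolation between $C^0$ and $C^q$ norms, I will show that for $y\in W^s_{loc}(x)$ the limit
$$H^{\A,s}_{x,y}=\lim_{n\to\infty}(\A^n_y)^{-1}\circ\A^n_x$$
exists in $C^t$ for every $t<r$, is H\"older in $(x,y)$, and satisfies $d_{C^t}(H^{\A,s}_{x,y},\Id)\le C\dist(x,y)^{\beta'}$. The unstable holonomies are built the same way, and so are the holonomies for $\B$. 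The key estimate is that the $n$-th term differs from the $(n+1)$-st by a map
$$(\A^n_y)^{-1}\circ\bigl(A(f^ny)^{-1}A(f^nx)\bigr)\circ \A^n_x,$$
whose $C^0$ distance to $\Id$ is $O(\la^{\beta n})$. The $C^r$ norms of the composed factors grow like $\sigma^{(1+r)n}$ via Fa\`a di Bruno. Interpolating between these two estimates gives a $C^t$ bound of order $(\sigma^{2(1+r)}\la^{\beta(q-r)})^{n\theta}$, which is summable. This is where the loss from $r$ to $t$ appears.

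Next I will construct $\Psi$ on a dense orbit. Fix a point $z$ whose orbit is dense and, using the closing lemma and local product structure, approximate it by periodic points near $p_0$. The natural definition is
$$\Psi_{z}=H^{\A,u}_{w,z}\circ H^{\A,s}_{p_0,w}\circ\Phi_{p_0}\circ H^{\B,s}_{w,p_0}\circ H^{\B,u}_{z,w},\qquad w=W^s_{loc}(p_0)\cap W^u_{loc}(z),$$
extended along the orbit by $\Psi_{f^mz}=\A^m_z\circ\Psi_z\circ(\B^m_z)^{-1}$. I then have to verify that $\Psi$ is uniformly H\"older along the orbit, so that it extends continuously to $X$. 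When $f^mz$ and $f^{m'}z$ are close, I will close the orbit segment to a periodic point $p$ near $p_0$ and use \eqref{per conj} and \eqref{Hold at p_0}. Because $\A^n_p=\Phi_p\B^n_p\Phi_p^{-1}$, the holonomy relations transport $\Phi_p$ to a map comparable with the two values of $\Psi$. The $C^0$ closeness $d_{C^0}(\Phi_p,\Phi_{p_0})\le c\,\dist(p,p_0)^\beta$ is then used together with the fact that holonomies and $\A^n_p$ nearly commute with $\Phi$-conjugation.

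The hard part is that $\Phi_p$ is only a homeomorphism and is controlled only in $C^0$, whereas we need $C^t$ control of $\Psi$. I will handle it in two stages. First, I will derive $C^0$ H\"older estimates for $\Psi$ by working purely in $C^0$: iterates of $\B$ are uniformly Lipschitz up to $K\sigma^n$, and the bunching lets the $\la^{\beta n}$ factors absorb this growth. Second, I will upgrade the estimates to $C^t$. Each $\Psi_{f^mz}$ is a composition of $\Phi_{p_0}\in\dqm$ with holonomies that are uniformly bounded in $C^r$. Hence the family $\{\Psi_x\}$ is uniformly bounded in $C^r$, and interpolation turns the $C^0$ H\"older estimates into $C^t$ H\"older estimates for any $t<r$. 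Invertibility in $\dtm$ follows because the same construction gives $\Psi_x^{-1}$ uniformly bounded.

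Finally, the conjugacy equation $\A_x=\Psi_{fx}\circ\B_x\circ\Psi_x^{-1}$ holds on the dense orbit by construction. It passes to all of $X$ by continuity, since composition is continuous in $C^0$ and the limit lies in $\dtm$ by the uniform bounds. Also $\Psi_{p_0}=\Phi_{p_0}$, which I will check by taking $z$-orbit points approaching $p_0$ and using \eqref{Hold at p_0}. I expect the main obstacle to be the step that derives $C^0$ H\"older bounds from the weak $C^0$ information on $\Phi_p$. In particular, controlling $\Phi_p$ conjugated by $\B^n_p$ over long periods needs a careful splitting of the orbit into halves, where the stable and unstable holonomies each contract.
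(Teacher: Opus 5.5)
\textbf{Gap: the uniform H\"older estimate along the dense orbit.} This step fails. Take a return $f^mz\approx f^{m'}z$ and set $n=m'-m$.

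If the return is not near $p_0$, the closing periodic point $p$ is not near $p_0$, so \eqref{Hold at p_0} says nothing about $\Phi_p$. Moreover, $\Phi_p$ is determined only up to right composition with elements of the centralizer of $\B^n_p$; for $\B=\Id$ it can be any homeomorphism. So there is nothing to ``transport'' to a map comparable with $\Psi_{f^mz}$ or $\Psi_{f^{m'}z}$.

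If the return is near $p_0$, the $C^0$ estimates still do not close. Replacing $\Phi_p$ by $\Phi_{p_0}$ inside $\Phi_p\circ\B^n_p\circ\Phi_p^{-1}$ costs about $\|D\B^n_p\|\,d_{C^0}(\Phi_p,\Phi_{p_0})\approx\sigma^n\dist(p,p_0)^\beta$. For a general return of a dense orbit, the return distance is not exponentially small in the return time $n$. Bunching absorbs $\sigma^n$ only against factors like $\la^{n\beta(q-r)}$, i.e.\ only when the closing distance is $O(\la^n)$. Your errors are powers of the return distance, not of $\la^n$, so ``the $\la^{\beta n}$ factors absorb the growth'' is simply not available.

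The claimed uniform $C^r$ bound has the same defect. By (H2), $\Psi_{f^mz}=H^{\A,u}_{f^mw,f^mz}\circ\Psi^s_{f^mw}\circ H^{\B,u}_{f^mz,f^mw}$. For large $|m|$, either this unstable holonomy or the stable holonomies inside $\Psi^s_{f^mw}$ are taken over leaf distances of order $\la^{-|m|}$. They are not local, and (H3) gives no bound. Your interpolation step ($C^0$ H\"older plus uniform $C^r$ bounds giving $C^t$ H\"older) would be a legitimate substitute for Lemma \ref{comp lemma}, but neither input is established.

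\textbf{Missing idea: use only homoclinic orbits of $p_0$.} These are orbits that close up exponentially fast relative to their length. The paper proceeds as follows.
\begin{enumerate}
\item For $p_0$ fixed, replace $\B$ by $\Phi_{p_0}\circ\B\circ\Phi_{p_0}^{-1}$, so that $\Phi_{p_0}=\Id$ and $\A_{p_0}=\B_{p_0}$. This weakens the $\B$-holonomy bounds to \eqref{Anx B0} and \eqref{Anx Bt} but keeps $C^0$ control.
\item Define $\Psi^s_x=H^{\A,s}_{p_0,x}\circ H^{\B,s}_{x,p_0}$ on $W^s(p_0)$ and $\Psi^u$ analogously on $W^u(p_0)$. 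By (H2) these are conjugacies along the leaves.
\item Agreement of $\Psi^s$ and $\Psi^u$ at a homoclinic point $x$ reduces to $\B^n_{f^{-n}x}\circ(\A^{2n}_{f^{-n}x})^{-1}\circ\B^n_x\to\Id$ in $C^0$. Here $\dist(f^{-n}x,f^nx)\le K_x\la^n$, so the closing point $p_n$ of period $2n$ satisfies $\dist(f^{-n}p_n,p_0)=O(\la^n)$. Hence $d_{C^0}(\Phi_{f^{-n}p_n},\Id)=O(\la^{n\beta})$, and every error term is $O(\sigma^{3n}\la^{n\beta(q-r)})\to0$.
\item On the homoclinic set $S$, $\Psi$ is invariant under local stable and unstable holonomies of both cocycles. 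This gives the uniform $C^r$ bound via a finite $\e$-dense subset of $S$, and the $C^t$ H\"older estimate via Lemma \ref{comp lemma}. Density of $S$ then gives the extension to $X$.
\item A non-fixed periodic $p_0$ needs a separate argument. The paper passes to $f^k$ and verifies the $f$-conjugacy equation at points of $W^s(p_0)\cap W^u(f^{k-1}p_0)$.
\end{enumerate}
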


We note that $\Psi_p$  does not necessarily coincide with $\Phi_p$  for $p\ne p_0$. For example, for $\B=\Id$ and any $\A$ of the form $\A_x=\Psi_{fx} \circ\Psi^{-1}_x$,  we can take $\Phi_p=\Id$ at every $p$.
\vskip.1cm

 The H\"older exponent of $\Psi:X\to \dtm$ may be smaller than $\beta.$ In the proof we obtain the exponent $\beta(q-r)(r-t)^2(q-t)$.  In the case of $\Phi_{p_0}=\Id$ we get  $\beta(q-r)(r-t)$.
\vskip.1cm

The bunching condition in the theorem ensures that 
the cocycles $\A$ and $\B$ have stable and unstable holonomies in $\drm$.
The holonomies are defined and their properties are stated in Proposition \ref{holonomies}.
They play a key role in the construction of the conjugacy.
\vskip.2cm

Now we state two results for a {\it constant} cocycle $\B$.

\begin{theorem} \label{Th2}  Let $(X,f)$ be a hyperbolic system with a fixed point, let $\M$ be a compact 
connected manifold,  let $q=k+\gamma$ where $k\in \N$ and $0<\gamma<1$, and let $k<t<q$.

\vskip.1cm 

\begin{itemize}
\item[{\bf (1)}] Let $\B\equiv B$ be a constant $\dqm$-valued cocycle over $(X,f)$ \\
such that the set $\{B^n:\, n\in \Z\}$ is pre-compact in $C^0(\M)$.
\vskip.1cm 

\item[{\bf (2)}] Let $\A$ be a $\beta$-H\"older continuous  $\dqm$-valued cocycle over $(X,f)$  satisfying the following bunching condition with $\la$ as in \eqref{Anosov def}:\\
there exist  $K>0$,\,  $\sigma>1$ and $t<r<q$ such that  $\,\sigma^{2(1+ r)} \la^{\beta(q-r)}<1\,$  and
$$
  |\A_x^n|_{C^r} \le K \sigma^n \quad\text{for all }x\in X \text{ and }n\in \N.
$$
\vskip.1cm

\item[{\bf (3)}]   Suppose that  for each periodic point $p=f^n(p)$ there exists  \\
$\Phi_p\in \dqm$ such that 
$\A^n_p=\Phi_p\circ \B_p^n \circ \Phi_p^{-1}$, and  the set \\
$\{\Phi_p^{-1}:\, p=f^n(p),\;p\in X,\; n\in \N\}$ is pre-compact in $C^0(\M)$.
\vskip.1cm 
\end{itemize}
\vskip.1cm 
Then there exists a H\"older continuous conjugacy $\Psi: X\to \dtm$  between $\A$ and $\B$.
\end{theorem}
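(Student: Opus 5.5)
The plan is to reduce Theorem \ref{Th2} to Theorem \ref{conjugate  data}. We show that the pre-compactness assumptions produce a new family $\tilde\Phi_p$ of periodic-data conjugacies satisfying the H\"older condition \eqref{Hold at p_0} at the fixed point $p_0$, with $\tilde\Phi_{p_0}\in\dqm$.

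First we check bunching for $\B$. By (1), the iterates $B^n$ are uniformly bounded in $C^0$, so they are equicontinuous. On its own this does not bound $|B^n|_{C^r}$. We therefore exploit the periodic data at the fixed point $p_0$: we have $\A_{p_0}^n = \Phi_{p_0}\circ B^n\circ\Phi_{p_0}^{-1}$, with $\Phi_{p_0}\in\dqm$ fixed. Hence $|B^n|_{C^r}\le C\,|\A^n_{p_0}|_{C^r}\le C'\sigma^n$, by interpolation of $C^r$ norms of compositions with a fixed $C^q$ diffeomorphism. Here $\sigma$ may need to be replaced by a slightly larger $\sigma'$ that still satisfies $\sigma'^{2(1+r)}\la^{\beta(q-r)}<1$, which is possible because the inequality is strict. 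So $\B$ is bunched as well, and Proposition \ref{holonomies} supplies holonomies for both cocycles. The holonomies of the constant cocycle are the identity.

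Next we construct $\tilde\Phi_p$. For a periodic point $p=f^n p$ near $p_0$, we use the standard Anosov closing/shadowing construction. Let $z=W^s_{loc}(p)\cap W^u_{loc}(p_0)$. Using the holonomies $H^{\A,s}_{p,z}$ and $H^{\A,u}_{z,p_0}$ and the fact that $\A^n_p$ is conjugate to $B^n$, we write
\[
\A^n_p = H_{p_0,p}\circ\bigl(\text{a map close to }\A^{n}_{p_0}\text{-type composition}\bigr)\circ H_{p_0,p}^{-1}.
\]
In the style of \cite{S15,KS26}, this yields a diffeomorphism $G_p$ with $d_{C^0}(G_p,\Id)\le c\,\dist(p,p_0)^\beta$ such that $\A^n_p=G_p\circ \Phi_{p_0}\circ C_p\circ B^n \circ C_p^{-1}\circ\Phi_{p_0}^{-1}\circ G_p^{-1}$ for some $C_p$ commuting with $B^n$. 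The natural choice is to define $\tilde\Phi_p=G_p\circ\Phi_{p_0}$ directly, with $G_p$ built from holonomies. The real content is to show that $\tilde\Phi_p$ conjugates $\A^n_p$ to $B^n$, and this is where pre-compactness enters.

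Consider $D_p=\Phi_{p_0}^{-1}\circ G_p^{-1}\circ\Phi_p$. It satisfies $D_p\circ B^n\circ D_p^{-1}=\Phi_{p_0}^{-1}G_p^{-1}\A^n_pG_p\Phi_{p_0}$. By (3), the maps $\Phi_p^{-1}$ lie in a compact set, and the $G_p$ are close to $\Id$. Passing to $C^0$-limits along sequences of periodic points approaching $p_0$ is the step I expect to be the main obstacle. The idea is to show that any limit of such $D_p$ lies in the closed group $\overline{\{B^m\}}$, whose elements commute with $B$. Pre-compactness of $\{B^m\}$ makes this closure a compact abelian group. The elements $D_p$ that are asymptotically $B^n$-equivariant must then be $C^0$-close to elements of the centralizer coming from $\overline{\{B^m\}}$, uniformly as $p\to p_0$. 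I would try to argue by contradiction using compactness of both families and the mixing of $f$: periodic orbits spend long stretches near $p_0$, where the cocycle is essentially $\A_{p_0}$. Replacing $\Phi_p$ by $\Phi_p\circ E_p^{-1}$ with $E_p\in\overline{\{B^m\}}$ gives a new conjugacy satisfying \eqref{Hold at p_0}, although possibly only with a weaker H\"older exponent, which suffices.

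Finally, with the adjusted data we apply Theorem \ref{conjugate  data} with the given $t<r<q$ and obtain the H\"older conjugacy $\Psi:X\to\dtm$.
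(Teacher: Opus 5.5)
There is a genuine gap. Your reduction to Theorem~\ref{conjugate data} depends entirely on producing new periodic conjugacies $\tilde\Phi_p$ that satisfy \eqref{Hold at p_0}, and you never establish them.

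The factorization $\A^n_p=G_p\circ\Phi_{p_0}\circ C_p\circ B^n\circ C_p^{-1}\circ\Phi_{p_0}^{-1}\circ G_p^{-1}$ with $d_{C^0}(G_p,\Id)\le c\,\dist(p,p_0)^\beta$ is asserted, not derived. It does not come out of the holonomies: $p$ and $p_0$ lie on no common stable or unstable leaf, and the orbit of $p$ leaves the neighborhood of $p_0$ during its period. Normalize $\A_{p_0}=B$ and take $z=W^s_{loc}(p)\cap W^u_{loc}(p_0)$. Then (H2) gives
\[
\A^n_p=L\circ B^n\circ R^{-1},\qquad R=H^{\A,s}_{z,p}\circ H^{\A,u}_{p_0,z},\qquad L=H^{\A,s}_{f^nz,\,p}\circ H^{\A,u}_{p_0,\,f^nz}.
\]
Here $R$ is close to $\Id$. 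But $L$ contains $H^{\A,u}_{p_0,f^nz}$, where $f^nz$ is close to $p_0$ in $X$ yet far from it along $W^u(p_0)$. To get $G_p$ near $\Id$ you would essentially need this holonomy to be near $\Id$. That is continuity at $p_0$ of the leafwise conjugacy $x\mapsto H^{\A,u}_{p_0,x}$, which is what the theorem is trying to prove, so the scheme is circular.

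The compactness/contradiction step you flag as the main obstacle is not carried out, and the claim it rests on is false in general. The centralizer of $B^n$ in $\dzm$ can be much larger than $\overline{\{B^m\}}$. For example, if $\A=\B\equiv B$ with $B^N=\Id$, then for periods divisible by $N$ the maps $\Phi_p$, and hence $D_p$, are unconstrained.

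Two further points block quoting Theorem~\ref{conjugate data} as a black box.
\begin{itemize}
\item That theorem needs the exponent $\beta$ in \eqref{Hold at p_0}, since it must beat a factor $\sigma^{3n}$. So ``a weaker exponent suffices'' is unjustified.
\item Writing $B^n=\Phi_{p_0}^{-1}\circ\A^n_{p_0}\circ\Phi_{p_0}$ does not give $|B^n|_{C^r}\le C'\sigma'^n$ with $\sigma'$ close to $\sigma$. Lemma~\ref{dlLO lemma 1}, or the chain rule with its products of several factors of $D\A^n_{p_0}$, only gives growth of order $\sigma^{rn}$, which destroys $\sigma^{2(1+r)}\la^{\beta(q-r)}<1$. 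This is exactly why the paper works with (H$\tilde 3$), (H$\bar 3$) for a conjugated cocycle rather than \eqref{growth}.
\end{itemize}

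The missing idea is to use the compact group $\overline{\{B^m\}}$ through recurrence, so that closeness of $\Phi_p$ to $\Phi_{p_0}$ is never needed.
\begin{enumerate}
\item Normalize $\A_{p_0}=B$ and $\Phi_{p_0}=\Id$, and pick $n_k\to\infty$ with $B^{n_k}\to\Id$ in $C^0$.
\item Since $\B$ is constant, its holonomies are trivial and no bunching of $\B$ is needed. Set $\Psi^s_x=H^{\A,s}_{p_0,x}$ on $W^s(p_0)$ and $\Psi^u_x=H^{\A,u}_{p_0,x}$ on $W^u(p_0)$.
\item Along $n_k$, one gets $H^{\A,s}_{x,p_0}=\lim_k\A^{n_k}_x$ and $H^{\A,u}_{p_0,x}=\lim_k\A^{n_k}_{f^{-n_k}x}$. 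So for a homoclinic point $x$, $(\Psi^s_x)^{-1}\circ\Psi^u_x=\lim_k\A^{2n_k}_{f^{-n_k}x}$.
\item The closing lemma gives a periodic point $p_k$ of period exactly $2n_k$ with $d_0(\A^{2n_k}_{f^{-n_k}x},\A^{2n_k}_{p_k})\to 0$, by (H3) and $\sigma^2\la^{\beta(q-r)}<1$.
\item $\A^{2n_k}_{p_k}$ is a conjugate of $B^{2n_k}\to\Id$ by a member of the equicontinuous family in (3), hence it tends to $\Id$.
\end{enumerate}
Thus $\Psi^s=\Psi^u$ on the homoclinic set, and H\"older continuity and extension to $X$ proceed as in the proof of Theorem~\ref{conjugate data}. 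In this argument, pre-compactness of the $\Phi_p$'s is used only as equicontinuity. Choosing the periods along the recurrence sequence $n_k$ replaces the H\"older condition at $p_0$ altogether.
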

\vskip.3cm 

This result  holds with the assumptions $(1, 2, 3)$ replaced by  $(1', 2', 3')$ below.

\begin{theorem} \label{Th1} Let $(X,f)$ be a hyperbolic system with a fixed point, let $\M$ be a compact 
connected manifold,  let $q=k+\gamma$ where $k\in \N$ and $0<\gamma<1$, and let $k<t<q$.

\vskip.1cm 
\begin{itemize}
\item[{\bf (1$'$)}] Let $\B\equiv B$ be a constant $\dqm$-valued cocycle over $(X,f)$ such that 
the set $\{B^n:\, n\in \N\}$ is bounded in $\donem$.
\vskip.1cm 

\item[{\bf (2$'$)}] Let $\A$ be a bounded $\dkm$-valued cocycle over $(X,f)$ that is $\beta$-H\"older continuous 
as a $\dqm$-valued cocycle. 
\vskip.1cm 

\item[{\bf (3$'$)}]  Suppose that  for each periodic point $p=f^n(p)\in X$ there exists 
\\ $\Phi(p)\in \dqm$ such that 
$\A^n_p=\Phi_p\circ \B_p^n \circ \Phi_p^{-1}$, and the set \\
$\{\Phi_p:\, p=f^n(p),\;p\in X,\; n\in \N\}$ is bounded in $\donem$.
\vskip.1cm 
\end{itemize}
Then there exists a H\"older continuous conjugacy $\Psi: X\to \dtm$  between $\A$ and $\B$.

\end{theorem}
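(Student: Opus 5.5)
We plan to reduce Theorem \ref{Th1} to Theorem \ref{Th2} by checking that $(1',2',3')$ imply $(1,2,3)$. The work is in two places: producing the bunching condition (2) for $\A$ from $C^1$-boundedness of the data, and turning $C^1$ bounds into $C^0$ pre-compactness.

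\emph{Conditions (1) and (3).} If $\{B^n: n\in\N\}$ is bounded in $\donem$, then the maps $B^n$ are uniformly Lipschitz. By Arzel\`a--Ascoli they form a pre-compact family in $C^0(\M)$. The family $\{B^{-n}\}$ needs more care, since (1) is stated for $n\in\Z$. A uniform bound on $|DB^n|$ does not immediately bound $|DB^{-n}|$. However, $B^n$ is uniformly Lipschitz and $\M$ is compact, so a uniform bound on $|DB^n|$ forces the Jacobian of $B^n$ to stay bounded away from $0$ on average; volume preservation up to bounded distortion then gives bounds on the inverse. I expect to need an argument here. We will try the following one. The closure $G$ of $\{B^n\}$ in $C^0$ is a compact semigroup of Lipschitz maps, and each element of $G$ is surjective by degree. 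A compact semigroup containing a group-like sequence is a group, so the limits of $B^{n_i}$ with $n_i\to\infty$ include $\Id$, and we can write $B^{-m}=\lim B^{n_i-m}$. This gives uniform Lipschitz bounds on $B^{-m}$, and hence pre-compactness of $\{B^n: n\in\Z\}$. For (3), boundedness of $\Phi_p$ in $\donem$ gives uniform $C^1$ bounds. We then need bounds on $\Phi_p^{-1}$, which will follow from $\|D\Phi_p^{-1}\|\le \|D\Phi_p\|^{\dim\M-1}/|\det D\Phi_p|$ once we have a lower bound on the Jacobian. If such a lower bound is unavailable, we will instead try to use the equation $\Phi_p^{-1}=\B_p^n\circ\Phi_p^{-1}\circ(\A^n_p)^{-1}$ in the same compact-semigroup way.

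\emph{Condition (2).} This is the main step. We must show that $|\A_x^n|_{C^r}$ grows slowly for some $r\in(t,q)$. The periodic relation gives uniform $C^1$ bounds at periodic points: $\A^n_p=\Phi_p B^n\Phi_p^{-1}$ with all factors bounded in $C^1$ (granting the inverse bound above). We will first show $|\A^n_x|_{C^1}\le C$ for all $x$ and $n$. For this we use the closing lemma: an orbit segment $x,\dots,f^nx$ is shadowed by a periodic orbit, and the H\"older continuity of $A$ controls the error. That control is only sub-exponential if we already have growth control, so we will argue with the standard Liv\v{s}ic-type estimate for the derivative cocycle on $T\M$. Its norms at periodic points are bounded, so by the result on Lyapunov exponents of $GL$-cocycles (approximation by periodic data, Kalinin) its growth is sub-exponential: $|\A^n_x|_{C^1}\le C_\e e^{\e n}$ for every $\e>0$. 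Next we upgrade to $C^k$ and $C^r$. Boundedness of $\A$ in $\dkm$, together with the Fa\`a di Bruno/chain-rule estimates for compositions (interpolation of $C^r$ norms between $C^1$ and $C^{k+1}$), bounds $|\A^n_x|_{C^{r}}$ by a polynomial in $\max_{j\le n}|\A^j|_{C^1}$ times $n^{C}$, i.e.\ by $C_\e e^{c(r)\e n}$. Taking $\e$ small gives $\sigma=e^{c\e}$ with $\sigma^{2(1+r)}\la^{\beta(q-r)}<1$. Since $\A$ is $\beta$-H\"older as a $\dqm$-valued cocycle, (2) holds.

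With (1), (2), (3) established, Theorem \ref{Th2} gives the H\"older continuous conjugacy $\Psi:X\to\dtm$. I expect the sub-exponential $C^1$ growth to be the main obstacle. The higher-derivative bootstrap is standard, and the inverse bounds in (1) and (3) are the secondary difficulty.
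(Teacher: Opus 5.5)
\paragraph{Overall plan.} Your plan is sound: verify the hypotheses of Theorem \ref{Th2} from $(1'),(2'),(3')$ and then apply that theorem. This is essentially what the paper does. It uses Proposition \ref{k+1} to get bunching for $\A$ and then reruns the Theorem \ref{Th2} construction, substituting $C^1$-boundedness of $\A^n_x$ for bunching in one estimate.

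\paragraph{Conditions (1) and (3).} These are easier than you make them. In the paper's convention $|g|_{C^1}=\|g\|_{C^1}+\|g^{-1}\|_{C^1}$, so boundedness in $\donem$ already bounds $DB^{-n}$ and $D\Phi_p^{-1}$, and Arzel\`a--Ascoli finishes. Your Jacobian route for $\Phi_p^{-1}$ could not work from forward bounds alone, so it is fortunate that it is not needed.

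\paragraph{The gap is in condition (2).} It sits exactly at the step you flagged. Kalinin's periodic approximation of Lyapunov exponents is a theorem about H\"older $GL(d,\R)$-cocycles over a \emph{hyperbolic} base with the closing property. The derivative cocycle $D\A$ is instead a linear cocycle over the skew product $F(x,s)=(fx,\A_x(s))$ on $X\times\M$, which has no hyperbolicity and no closing lemma in the fiber.

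Closing in $X$ does give a periodic $p$ with $A_{f^ip}$ exponentially $C^1$-close to $A_{f^ix}$. But comparing $D_s\A^n_p$ with $D_s\A^n_x$ means comparing derivatives at the points $\A^i_p(s)$ and $\A^i_x(s)$. Their separation is governed by the Lipschitz constants of the partial compositions $\A^m_{f^jp}$. The periodic data does not control these, since it only controls full-period returns, and they are exactly the growth you are trying to bound. So citing Kalinin does not remove the circularity you noticed.

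\paragraph{What is needed instead.} You need a separate, nontrivial result specific to diffeomorphism cocycles. It says that $C^1$-bounded periodic data forces all fiber Lyapunov exponents of all $F$-invariant measures to vanish, which gives uniform sub-exponential growth of $D\A^{\pm n}$. For trivial periodic data this goes back to Kocsard--Potrie \cite{KP}; the bounded case is in \cite{S19}.

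The paper imports this as Proposition \ref{k+1}, which gives $|\A^n_x|_{C^{k+1}}\le K\sigma^{|n|}$ for every $\sigma>1$. Its proof contains your Fa\`a di Bruno bootstrap from $C^1$ to $C^{k+1}$, and that is where the $\dkm$-boundedness of $\A$ is used. Replace the Kalinin step with this proposition, take $r=\frac12(t+q)$ and $\sigma$ close to $1$, and your reduction to Theorem \ref{Th2} is complete.
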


We note that  having a set bounded in $\donem$ is equivalent to having  the first derivatives of the maps and their inverses uniformly bounded. 

The assumption in $(2')$ that the cocycle $\A$ is bounded in $\dkm$ is used only  in Proposition \ref{k+1} to obtain bunching from boundedness of the periodic data in $\donem$.
\vskip.1cm

 In Theorems \ref{Th2} and \ref{Th1} we obtain the H\"older exponent $\beta(q-r)(r-t)(q-t)$ for $\Psi$ .

\vskip.2cm

The paper is structured as follows. Section 2 is devoted to the preliminaries: in subsection 2.1 we describe hyperbolic dynamical systems, in 2.2 we define distance in the spaces of homeomorphisms and diffeomorphisms, and in 2.3 we give various estimates for the distances, and in  2.4 we formulate the definition and properties of the holonomies of diffeomorphism-valued cocycles. In Section 3 we prove Theorem \ref{conjugate  data}, and in Section 4 we prove Theorems \ref{Th2} and \ref{Th1}.
\vskip.1cm 
 
The proofs of the three theorems follow the same general scheme.  First we adjust the cocycle $\B$ so that $\A$ and $\B$ have the same value at a fixed point $p_0$. Using holonomies of the cocycles, we define conjugacies $\Psi^s$ and $\Psi^u$ on the stable and unstable leaves of $p_0$. Then we show that $\Psi^s$ and $\Psi^u$ coincide on the homoclinic set of $p_0$,  and thus obtain a conjugacy  $\Psi$ on this  set. The main difference between the proofs of the theorems is in this step. We show that $\Psi$ is H\"older continuous on the homoclinic set, which is dense in $X$, and hence $\Psi$ extends to a H\"older continuous conjugacy between $\A$ and $\B$ on $X$. In Theorem \ref{conjugate  data}, we then deduce the general case of a periodic point $p_0$. The proofs draw on ideas and techniques from \cite{S15,S19,S22,KS26,BK}.

\section{Preliminaries}

\subsection{Hyperbolic dynamical systems}\label{hyperbolic} See \cite{KtH} for details.$\;$\\
Below we describe the three main classes  of these systems.

\vskip.1cm
\noindent{\bf Transitive Anosov diffeomorphisms.} 
A diffeomorphism  $f$ of  compact connected  manifold $X$
 is called {\it Anosov}\, if there exist a splitting 
of the tangent bundle $TX$ into a direct sum of two $Df$-invariant 
continuous subbundles $E^s$ and $E^u$,  a Riemannian 
metric on $X$, and a number $\la$ such that 
\begin{equation}\label{Anosov def}
\|D_x f (v^s)\| < \la < 1 < \la^{-1} <\|D_x f (v^u)\|
\end{equation}
for any $x \in X$ and unit vectors  
$\,v^s\in E^s(x)$ and $\,v^u\in E^u(x)$.
The {\it stable and unstable subbundles} $E^s$ and $E^u$
are tangent to the {\it stable and unstable foliations} 
$W^s$ and $W^u$. 
The {\it local stable manifold} of $x$, $W^{s}_{loc}(x)$, is 
a small ball in $W^s(x)$ centered at $x$  so that 
$$
\dist (f^nx, f^ny)\le \la^n \dist (x,y)  \quad\text{for all }y\in W^s_{loc}(x)\text{ and }n\in \N.
$$
Local unstable manifolds 
are defined similarly. We assume that they are small enough so that  
$\,W^{s}_{loc}(x) \cap W^{u}_{loc}(z)\,$ consists of a single point
for any sufficiently close $x,z\in X$. 

A diffeomorphism is said to be {\it (topologically) transitive} if there is a point $x$ in $X$
with dense orbit. All known examples of Anosov diffeomorphisms have this property. Any transitive Anosov diffeomorphism is {\it topological mixing}, that is, for any open sets $U_1,U_2 \subset X$ there exists $N\in\N$ such that $f^n(U_1)\cap U_2\ne \emptyset\,$ for all $n\ge N$.

\vskip.2cm


\noindent{\bf Mixing diffeomorphisms of locally maximal hyperbolic sets.}
More generally, let $f$ be a diffeomorphism of a manifold $\n$.
A compact $f$-invariant  set $X \subset \n$ is
called {\em hyperbolic} if there  exist a continuous $Df$-invariant splitting 
$T_X \n = E^s\oplus E^u$, and a Riemannian metric on an open set
$U \supset X$ such that \eqref{Anosov def} holds for all $x \in X$.
Local stable and unstable manifolds are defined similarly for any $x \in X$,
 and we denote  their intersections with $X$ by $W^{s}_{loc}(x)$ and  $W^{u}_{loc}(x)$.
The set $X$ is called {\em locally maximal} if 
$X= \bigcap_{n\in \Z} f^{-n }(U)$ for some open set $U\supset X$. 
This property ensures that $W^{s}_{loc}(x) \cap W^{u}_{loc}(z)$ exists in $X$.
We assume that $f|X$ is topologically mixing.
 

\vskip.2cm
\noindent{\bf Mixing subshifts of finite type.}
Let $M$ be $k \times k$ matrix with entries from $\{ 0,1 \} $ such that all 
entries of $M^N$ are positive for some $N$. Let
$$
X= \{ \,x=(x_n) _{n\in \Z}\, : \,\; 1\le x_n\le k \;\text{ and }\;
 M_{x_n,x_{n+1}}=1 \,\text{ for every } n\in \Z \,\}.
$$
\noindent The shift map $f:X\to X\,$ is defined by 
$(f(x))_n=x_{n+1}$.
The system $(X,f)$ is called a {\em  mixing  subshift of finite type}. 
We fix $\la \in (0,1)$ and consider the metric 
$$
d(x,y) = d_\la (x,y)=\la^{n(x,y)},
\;\text{ where }\;n(x,y)=\min\,\{ \,|i|\,: \; x_i \ne y_i  \}.
$$
The following sets play the role of the stable and unstable 
manifolds of $x$
$$
\begin{aligned}
&W^s(x)=\{\,y: \; \text{for some $i_0$}, \;x_i=y_i\, \;\text{for all }\,i\ge i_0\,\}, \\
& W^u(x)=\{\,y:\; \text{for some $i_0$}, \;x_i=y_i\, \;\text{for all }\,i\le i_0\,\},\\
&W^s_{loc}(x)=\{\,y: \; x_i=y_i \;\text{for all }i\ge 0\,\}, \quad 
W^u_{loc}(x)=\{\,y:\; x_i=y_i \;\text{for all }i\le 0\,\}.
\end{aligned}
$$

 \vskip.2cm 
\noindent{\bf Anosov Closing Lemma.}  These systems have the following property.

\begin{lemma} \cite[6.4.15-17]{KtH} \label{closing}\,
Let $(X,f)$ be a hyperbolic dynamical system as above. Then there exist  
constants $c',\, \delta' >0$ such that for any $x \in X$ and $k\in\N$ with 
$\dist (x, f^k (x)) < \delta'$ there exists a periodic point $p \in X$ with 
$f^k p =p$ such that the orbit segments $x, f(x), \dots , f^k (x)$ and 
$p, f(p), \dots , f^k(p)\,$ satisfy
$$
\dist (f^i (x), f^i (p) \le c\,  \dist (x, f^k (x))\;\,\text{ for every $i=0, \dots , k$.}
$$
\end{lemma}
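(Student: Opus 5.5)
The statement to prove is the Anosov Closing Lemma (Lemma \ref{closing}), cited from \cite{KtH}. The plan is to handle the smooth cases (Anosov diffeomorphisms and locally maximal hyperbolic sets) by the standard shadowing argument, and the subshift case by a direct combinatorial construction.

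\textbf{Smooth cases.} Let $x\in X$ with $\dist(x,f^kx)<\delta'$. Form the periodic pseudo-orbit $(x_i)_{i\in\Z}$ by setting $x_i=f^{i \bmod k}(x)$. This is a $\delta'$-pseudo-orbit, since its only jumps are from $f^{k-1}x$ to $x$, of size $\dist(f^kx,x)$. Next, invoke the shadowing theorem for hyperbolic sets, obtained from the hyperbolic fixed point theorem on the Banach space of bounded sequences. Concretely, I would:
\begin{enumerate}
\item work in exponential charts along the pseudo-orbit;
\item set up the operator $(v_i)\mapsto (\exp^{-1}_{x_{i+1}} f(\exp_{x_i} v_i))$ on $\ell^\infty$;
\item note that it is a small Lipschitz perturbation of a hyperbolic linear operator, thanks to \eqref{Anosov def} and continuity of the splitting;
\item conclude that it has a unique fixed point, of size at most $C\delta$ where $\delta=\dist(x,f^kx)$.
\end{enumerate}
This produces a true orbit $(y_i)$ with $\dist(y_i,x_i)\le c\,\delta$ for all $i$.

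Uniqueness of the shadowing orbit (expansivity) then gives periodicity. The shifted sequence $(y_{i+k})$ shadows $(x_{i+k})=(x_i)$ equally well, so $y_{i+k}=y_i$. Thus $p=y_0$ satisfies $f^kp=p$, and $\dist(f^ix,f^ip)\le c\,\delta$ for $0\le i\le k$.

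For a locally maximal hyperbolic set, the shadowing orbit lies in a small neighborhood $U$ of $X$ for all time. Local maximality then forces $p\in X$.

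\textbf{Subshift case.} If $d(x,f^kx)<1$, then $x_0=x_k$, and more precisely $x_i=x_{k+i}$ for $|i|<m$ where $d=\la^m$. Let $p$ be the periodic sequence repeating the word $x_0\dots x_{k-1}$. It is admissible because $M_{x_{k-1},x_0}=M_{x_{k-1},x_k}=1$.

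For $0\le i\le k$, the sequences $f^ix$ and $f^ip$ agree on a window around $0$ of radius at least $m$. On the forward side they agree on $x_i,\dots,x_{k-1}$ and beyond, using $x_{k+j}=x_j$ for $j<m$; the backward side is symmetric. This gives $d(f^ix,f^ip)\le \la^{m}\cdot \la^{-1}$ at worst, which is a constant multiple of $d(x,f^kx)$.

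\textbf{Main obstacle.} The main difficulty is the uniform linear estimate $\dist(y_i,x_i)\le c\,\delta$ in the smooth case, with $c$ and $\delta'$ independent of $k$ and $x$. I would obtain it from the uniform bound on the inverse of $\Id-L$, where $L$ is the hyperbolic linearization along the pseudo-orbit. This requires uniform hyperbolicity and uniform $C^1$ control of $f$ in charts, which compactness provides.
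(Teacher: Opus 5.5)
Your proposal is correct. It follows the standard argument behind the paper's citation of \cite{KtH}, since the paper gives no proof of its own: a hyperbolic fixed-point (contraction) argument in exponential charts along the periodic pseudo-orbit, with local maximality placing $p$ in $X$. The only cosmetic difference is that you could run the contraction directly on the closed subspace of $k$-periodic sequences, which your operator preserves because $x_{i+k}=x_i$; this gives periodicity without your separate uniqueness-and-shift step.

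Two small points to tidy up. For $i=k$ you need the extra triangle inequality $\dist(f^kx,f^kp)\le \dist(f^kx,x)+\dist(x,p)$. In the subshift case with $m>k$, the agreement $x_l=p_l$ for $-m<l<k+m$ requires iterating $x_l=x_{l-k}$. Once you do this you actually get $d(f^ix,f^ip)\le d(x,f^kx)$, so $c=1$ works there.
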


\vskip.3cm

\subsection{Distances on the space $\drm$} \label{Crdist}
(\cite[Section~5]{LW},  \cite[Section 2.2]{S19})\,
We fix a smooth background Riemannian metric and the corresponding distance 
$d_\M$ on $\M$.
We denote the space of homeomorphisms of $\M$ by $\dzm$, and for $g,h\in \dzm$  we set
\begin{equation}\label{d0}
d_0(g,h)= \max_{t\in \M}\, d_\M(g(t),h(t)) \quad\text{and}\quad
d_{C^0}(g,h)=d_0(g,h)+ d_0(g^{-1},h^{-1}).
\end{equation}

Now we consider $r\ge 1$. 
The $C^r$ topology on the group of diffeomorphisms  $\drm$ can
be defined using coordinate patches and the $C^r$ norm in the Euclidean space. 
For any $g\in \drm$,  $r\in \N$, we define its $C^r$ size  as
$$
  |g|_{C^r}= \|g\|_{C^r}+ \|g^{-1}\|_{C^r} , \quad \text{where }\; 
  \|g\|_{C^r}= \max_{t\in \M} d_\M(g(t),t)+ \max_{1 \le i \le r}\max_{t\in \M} \|D_t^ig\|.
$$
Here $D_t^i\,g$ is the derivative of $g$ of order $i$ at $t$, and its norm is defined as
the norm of the corresponding multilinear form from $T_t\M$ to $T_{g(t)}\M$ 
with respect the Riemannian metric. For $r=k+\a$, with $k\in \N$ and  $0<\a<1$, the definition is similar with
$$
 \|g\|_{C^r}=\|g\|_{C^k} + 
 \sup \,\{\, \|D^k_tg -D^k_{t'}g\|\cdot d_\M(t,t')^{-\alpha} : \;t, t'\in \M, \; 0< d_\M(t,t') <\e_0 \}.
$$
Here $\e_0$ is chosen small compared to the injectivity radius of $\M$ so that
the tangent bundle is locally trivialized via parallel transport and the difference makes sense.

\vskip.1cm 
A natural distance $d_{C^r}(g,h)$ on $\drm$,  $r\in \N$,
was defined in \cite{LW}  as the infimum of the lengths of piecewise $C^1$ paths  
in $\drm$ connecting $h$ with $g$  and $h^{-1}$ with $g^{-1}$,
where the length of a path $p_s$ is 
$$
\ell_{C^r} (p_s) =\max_{0 \le i \le r}\max_{t\in \M}\int \|\tfrac d{ds} (D_t^i \, p_s) \|\,ds.
$$
For $r=k+\a$,  one also adds 
the corresponding H\"older term:
$$
\begin{aligned}
&\ell_{C^r} (p_s) =\ell_{C^k} (p_s) 
+ \max_{t\in \M}\int |\tfrac d{ds}  \| (D^k p_s) \|_{\a,t}|\,ds, \quad\text{where}\\
& \|D^k g \|_{\a,t}= \sup \,\{\, \|D^k_{t'}g -D^k_{t}g\|\cdot d_\M(t',t)^{-\alpha} : 
 \,\;t'\in \M, \;\, 0< d_\M(t,t') <\e_0 \}.
 \end{aligned}
$$
For sufficiently $C^r$-close diffeomorphisms, the distance $d_{C^r}(g,h)$ is Lipschitz equivalent 
to $ \|g-h\|_{C^r}+ \|g^{-1}-h^{-1}\|_{C^r}$, where  the difference is understood 
using local trivialization. Specifically,
there exist  constants $\kappa$ and $\delta_0>0$ depending only on $r$ and 
the Riemannian metric so that 
\begin{equation}\label{compare 1}
\begin{aligned}
&\kappa^{-1} d_{C^r}(g,h) \le  \|g-h\|_{C^r}+ \|g^{-1}-h^{-1}\|_{C^r} \le \kappa \, d_{C^r}(g,h), \;\text{ provided that }\\
&\text{either $\;d_{C^r}(g,h)< \delta_0 |g|_{C^r}^{-1}\;$ or 
$\;\|g-h\|_{C^r}+ \|g^{-1}-h^{-1}\|_{C^r}<\delta_0 |g|_{C^r}^{-1}$.}
\end{aligned}
\end{equation}


\subsection{Estimates of norms and distances}$\;$\\
In the arguments we will use the following estimates for distances $d_0$ and $d_{C^r}$.

\begin{lemma}\label{simple}
If the maps below are homeomorphisms of $\M$ and $g$ is differentiable with $\|Dg\|\le M_g$  then 
\begin{equation}\label{d_0 1}
d_0(h_1\circ g, h_2\circ g)=d_0(h_1, h_2)\quad\text{and}\quad
d_0(g\circ h_1, g\circ h_2)\le M_g\, d_0(h_1,h_2)
\end{equation} \vspace{-20 pt}
\begin{equation}\label{d_0 2}
d_0(h_1\circ g\circ h_2,  g) \le  d_0(h_1, \Id)+M_g\, d_0(h_2,\Id)
\end{equation} \vspace{-20 pt}
\begin{equation}\label{d_0 3}
d_0(h_1\circ h_2, \Id)\le  d_0(h_1, \Id)+ d_0(h_2, \Id), 
\end{equation} \vspace{-20 pt}
\begin{equation}\label{d_0 4}
d_0(g\circ h,\, \tilde g\circ \tilde h)\le  d_0 (g, \tilde g) + M_g\, d_0 (h,\tilde h), \quad\text{and}
\end{equation} \vspace{-20 pt}
\begin{equation}\label{d_0 5}
d_0(h_1\circ h_2\circ h_3, \tilde h_1\circ \tilde h_2\circ \tilde h_3,)\le 
  d_0 (h_1, \tilde h_1) + M_{h_1} d_0 (h_2,\tilde h_2) + M_{h_1}M_ {h_2} d_0 (h_3,\tilde h_3),
\end{equation}
provided that $h_1$ and $h_2$ are differentiable with $\|D h_1\|\le M_{h_1}$ and $\|D h_2\|\le M_{h_2}$.
 \end{lemma}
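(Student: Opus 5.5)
The plan is to verify each inequality pointwise: take the supremum over $t\in\M$ of $d_\M$ between the two image points, and use the mean value inequality on $\M$. For a differentiable $g$ with $\|Dg\|\le M_g$ (and $\M$ connected), $g$ is $M_g$-Lipschitz for $d_\M$: integrate $\|Dg\|$ along a near-minimizing piecewise $C^1$ curve joining two points. Everything else is the triangle inequality for $d_\M$.

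\textbf{Step 1: \eqref{d_0 1}.} For the first identity, put $s=g(t)$. Since $g$ is a homeomorphism, $s$ ranges over all of $\M$ as $t$ does. Hence
$$\max_t d_\M(h_1(g(t)),h_2(g(t)))=\max_s d_\M(h_1(s),h_2(s)).$$
For the second inequality, the Lipschitz property gives
$$d_\M(g(h_1(t)),g(h_2(t)))\le M_g\, d_\M(h_1(t),h_2(t)),$$
and we take the maximum over $t$.

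\textbf{Step 2: \eqref{d_0 4}.} Using the triangle inequality through the intermediate map $g\circ\tilde h$,
$$d_0(g\circ h,\tilde g\circ\tilde h)\le d_0(g\circ h,g\circ\tilde h)+d_0(g\circ\tilde h,\tilde g\circ\tilde h).$$
By Step 1, the first term is at most $M_g\,d_0(h,\tilde h)$ and the second equals $d_0(g,\tilde g)$.

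\textbf{Step 3: \eqref{d_0 3}, \eqref{d_0 2}, \eqref{d_0 5}.} For \eqref{d_0 3}, write
$$d_0(h_1\circ h_2,\Id)\le d_0(h_1\circ h_2,h_2)+d_0(h_2,\Id).$$
The first term equals $d_0(h_1,\Id)$ by the first identity of \eqref{d_0 1} with $h_2$ in the role of $g$; only invertibility is used here, no differentiability. For \eqref{d_0 2}, apply \eqref{d_0 4} with the outer pair $(h_1\circ g,\ g)$ and the inner pair $(h_2,\ \Id)$:
$$d_0(h_1\circ g\circ h_2,\ g\circ\Id)\le d_0(h_1\circ g,g)+M_{h_1\circ g}\,d_0(h_2,\Id).$$
Iterating that way would produce the constant $M_{h_1\circ g}$, which is not the one stated. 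So instead split as
$$d_0(h_1\circ g\circ h_2,\ g\circ h_2)+d_0(g\circ h_2,\ g).$$
The first term equals $d_0(h_1,\Id)$ by \eqref{d_0 1} applied with the homeomorphism $g\circ h_2$. The second is at most $M_g\,d_0(h_2,\Id)$. This gives exactly \eqref{d_0 2} with $M_g$, and no Lipschitz bound on $h_1$ is needed.

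For \eqref{d_0 5}, telescope through $\tilde h_1\circ h_2\circ h_3$ and $h_1\circ\tilde h_2\circ h_3$. It is cleaner to order the chain as
$$h_1h_2h_3\to h_1h_2\tilde h_3\to h_1\tilde h_2\tilde h_3\to \tilde h_1\tilde h_2\tilde h_3.$$
The three differences are then bounded as follows. The first is at most $M_{h_1h_2}\,d_0(h_3,\tilde h_3)\le M_{h_1}M_{h_2}\,d_0(h_3,\tilde h_3)$, by the chain rule. The second is at most $M_{h_1}\,d_0(h_2,\tilde h_2)$, by Step 1 after precomposing with $\tilde h_3$. The third equals $d_0(h_1,\tilde h_1)$ by the first identity.

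No step is a real obstacle. The only points needing care are choosing the telescoping order so that each Lipschitz constant lands on the stated map, and justifying the Lipschitz bound from $\|Dg\|\le M_g$ on a Riemannian manifold. The latter is the standard length argument, using that $d_\M$ is the path-length metric.
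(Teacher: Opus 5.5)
Your proof is correct and is essentially the paper's argument: the paper obtains \eqref{d_0 2} and \eqref{d_0 3} by exactly your splittings through $g\circ h_2$ and $h_2$, respectively, and says \eqref{d_0 4} and \eqref{d_0 5} follow similarly, treating \eqref{d_0 1} as evident, so your Steps 1--3 just write out those details. Your final telescoping order $h_1h_2h_3\to h_1h_2\tilde h_3\to h_1\tilde h_2\tilde h_3\to\tilde h_1\tilde h_2\tilde h_3$ is the right one, because it puts the Lipschitz constants on $h_1$ and $h_2$ as the hypotheses require; drop the first ordering you mention.
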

 
 \begin{proof}
 The inequalities  \eqref{d_0 2}  and \eqref{d_0 3} are  obtained, respectively,  using 
 $$
 \begin{aligned}
 & d_0(h_1\circ g\circ h_2,  g)\le d_0(h_1\circ g\circ h_2, g\circ h_2) + d_0(g\circ h_2, g)\quad\text{and} \\
 & d_0(h_1\circ h_2, \Id)\le  d_0(h_1\circ h_2, h_2) +  d_0(h_2, \Id), \\
 \end{aligned} 
$$
and \eqref{d_0 4}, \eqref{d_0 5} follow similarly.
 \end{proof} 
 
\begin{lemma} {\em(Follows directly from \cite[Proposition 5.5]{dlLO})}   \label{dlLO lemma 1}\\
For any $r\ge 1$ there exists a constant  $M_r$ such that for any $h,g \in C^r(\M)$,
$$
  \| h\circ g\|_{C^r}  \le M_r \,  \|h\|_{C^r} (1+\| g\|_{C^r})^r .
  $$
\end{lemma}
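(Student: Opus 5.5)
The plan is to reduce to local coordinates and apply the classical Faà di Bruno formula, with the fractional Hölder part handled separately, essentially reproducing the argument of \cite[Proposition 5.5]{dlLO} in the manifold setting. First, by compactness of $\M$, I would fix a finite atlas of charts of radius comparable to $\e_0$ in which the background metric is uniformly equivalent to the Euclidean one. In these charts the norms $\|D^i_t g\|$ and the Hölder seminorms are comparable, up to constants depending only on $r$ and the atlas, to their Euclidean counterparts. All constants produced this way are absorbed into $M_r$.

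\textbf{The $C^0$ term.} By the triangle inequality,
$$
d_\M(h(g(t)),t)\le d_\M(h(g(t)),g(t))+d_\M(g(t),t)\le \|h\|_{C^r}+\|g\|_{C^r}.
$$
To bound this by a multiple of $\|h\|_{C^r}(1+\|g\|_{C^r})^r$, I need $\|h\|_{C^r}$ bounded below. In the intended applications $h$ is a diffeomorphism of the compact manifold $\M$, so $\max_t\|D_th\|\ge 1$ by a volume argument, hence $\|h\|_{C^r}\ge 1$. Then $\|h\|_{C^r}+\|g\|_{C^r}\le \|h\|_{C^r}(1+\|g\|_{C^r})$. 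For general $C^r$ maps one either uses the convention of \cite{dlLO} for the zeroth-order term or adds this harmless lower bound as a hypothesis. This is the only place where I foresee a subtlety in matching the paper's norm to that of \cite{dlLO}.

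\textbf{Integer derivatives, $1\le i\le k$.} Faà di Bruno writes $D^i(h\circ g)_t$ as a sum, with universal combinatorial coefficients, of terms
$$
D^j_{g(t)}h\,\bigl(D^{m_1}_tg,\dots,D^{m_j}_tg\bigr),\qquad m_1+\dots+m_j=i,\; 1\le j\le i .
$$
Each term is bounded by $\|h\|_{C^r}\|g\|_{C^r}^j\le \|h\|_{C^r}(1+\|g\|_{C^r})^{i}$. Since $i\le k\le r$, this is at most $\|h\|_{C^r}(1+\|g\|_{C^r})^r$.

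\textbf{Hölder term for $r=k+\a$.} This is the main step. I would estimate $D^k(h\circ g)_t-D^k(h\circ g)_{t'}$ for $d_\M(t,t')<\e_0$ term by term in Faà di Bruno, telescoping each multilinear expression as
$$
\bigl(D^j_{g(t)}h-D^j_{g(t')}h\bigr)(\cdots)+\sum_{l} D^j_{g(t')}h\bigl(\dots,D^{m_l}_tg-D^{m_l}_{t'}g,\dots\bigr).
$$
\begin{itemize}
\item For the first piece with $j=k$, use the $\a$-Hölder bound of $D^kh$ together with $d(g(t),g(t'))\le\|Dg\|\,d(t,t')$, which gives a factor $\|h\|_{C^r}\|g\|_{C^r}^{\a}$ times $\|g\|_{C^r}^k$. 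When $d(g(t),g(t'))\ge\e_0$, instead bound the difference by $2\|h\|_{C^r}$ and use $d(t,t')^{-\a}\le(\|Dg\|/\e_0)^\a$.
\item For $j<k$, use the Lipschitz bound via $D^{j+1}h$ and interpolate $d^{1}\le \e_0^{1-\a}d^{\a}$.
\item For the $g$-differences, use the Hölder seminorm of $D^kg$ when $m_l=k$, and the mean value theorem with $D^{m_l+1}g$ when $m_l<k$.
\end{itemize}
Collecting exponents, every term is bounded by a constant times $\|h\|_{C^r}(1+\|g\|_{C^r})^{k+\a}=\|h\|_{C^r}(1+\|g\|_{C^r})^r$.

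The bookkeeping of exponents in the Hölder step, together with the chart-change comparison of the parallel-transport differences, is where I expect the work to lie, but it is routine once organized as above.
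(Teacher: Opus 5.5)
Your proposal is correct and follows essentially the paper's route: the paper simply invokes the Euclidean composition estimate \cite[Proposition 5.5]{dlLO}, whose proof is the Fa\`a di Bruno and H\"older bookkeeping you carry out in charts. Your hedge on the zeroth-order term is unnecessary. For every $C^1$ map $h:\M\to\M$, choosing $t,s$ with $d_\M(t,s)=\mathrm{diam}\,\M$ gives $\mathrm{diam}\,\M\le 2\max_\tau d_\M(h(\tau),\tau)+\max_\tau\|D_\tau h\|\,\mathrm{diam}\,\M$, hence $\|h\|_{C^r}\ge\min\{1,\tfrac12\mathrm{diam}\,\M\}$ with no surjectivity needed. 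For the difference $h_1^{-1}-h_2^{-1}$, to which the paper also applies the lemma (proof of Lemma~\ref{comp lemma}), the zeroth-order term is a sup norm and trivially does not grow under composition.
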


\begin{lemma} \cite[Proposition 6.2(iii)]{dlLO}\label{dlLO lemma 2} $\;$\\
 Let $k\in \N$ and $k<t<r<k+1$.
There exist $0<\delta_1<1$ and $ M'>0$ such that if $h_1, h_2 \in C^t(\M)$,\, 
$\| h_1 - h_2\|_{C^t}<\delta_1$,  and $\,g\in C^r(\M)$,\, then
$$
  \| g\circ  h_1 - g\circ h_2\|_{C^t} \le 
  M' \,\|g\|_{C^r} \,\|  h_1 -  h_2\|_{C^t} ^{r-t}.
$$
\end{lemma}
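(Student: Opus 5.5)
The plan is to reduce to Euclidean charts and argue derivative by derivative, using the Faà di Bruno formula. The key device is an interpolation between the $C^0$ smallness of $h_1-h_2$ and the $(r-k)$-H\"older regularity of $D^kg$; this is what produces the exponent $r-t$. Throughout, the constants are allowed to depend on a bound for $\|h_1\|_{C^t}$. Since $\|h_1-h_2\|_{C^t}<\delta_1$, this also bounds $\|h_2\|_{C^t}$, and in particular gives a uniform Lipschitz bound $L$ for both maps (because $t>k\ge1$). Set $\e=\|h_1-h_2\|_{C^t}<\delta_1<1$.

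\textbf{Step 1: structure of the derivatives.} By compactness of $\M$ and the choice of $\e_0$, it suffices to estimate in finitely many coordinate patches, with $\delta_1$ small enough that $h_1(x)$ and $h_2(x)$ lie in a common chart. For $0\le j\le k$, Faà di Bruno writes
$$D^j(g\circ h)=\sum_{i\le j} (D^ig\circ h)\cdot P_{i,j}(Dh,\dots,D^{j}h),$$
where each $P_{i,j}$ is a polynomial. The difference $D^j(g\circ h_1)-D^j(g\circ h_2)$ then splits into two kinds of terms:
\begin{itemize}
\item[(a)] terms of the form $(D^ig\circ h_1-D^ig\circ h_2)\cdot P_{i,j}(h_1)$;
\item[(b)] terms of the form $(D^ig\circ h_2)\cdot\bigl(P_{i,j}(h_1)-P_{i,j}(h_2)\bigr)$.
\end{itemize}

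\textbf{Step 2: the routine terms.} Terms of type (b) are bounded in $C^0$ by $C\|g\|_{C^r}\e$. For $i<k$ the derivative $D^ig$ is Lipschitz, so the corresponding (a) terms are bounded by $C\|g\|_{C^r}\e$. For $i=k$, $D^kg$ is $(r-k)$-H\"older, so the (a) term is bounded by $C\|g\|_{C^r}\e^{r-k}$. Since $\e<1$ and $r-k$, $1$ and $r-t$ are all at least $r-t$, every $C^0$ contribution is $\le C\|g\|_{C^r}\e^{r-t}$.

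The $(t-k)$-H\"older seminorm of the order-$k$ derivative is handled the same way for almost all terms. Products of H\"older functions are controlled by the usual product rule. Every factor except $D^kg\circ h_i$ is at least $(t-k)$-H\"older, with norm bounded by $C\|g\|_{C^r}$. Every difference of such factors is small, of size $\e$, in the $(t-k)$-H\"older norm, because $\|h_1-h_2\|_{C^t}=\e$ and $D^ig$ is $C^1$ for $i<k$.

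\textbf{Step 3: the main obstacle.} The one delicate term is the $(t-k)$-H\"older seminorm of
$$\varphi=D^kg\circ h_1-D^kg\circ h_2.$$
We know two things about $\varphi$: $|\varphi|_{C^0}\le \|g\|_{C^r}\e^{r-k}$, and $\varphi$ is $(r-k)$-H\"older with constant $2L^{r-k}\|g\|_{C^r}$. Interpolate between these according to the size of $|x-y|$ relative to $\e$.
\begin{itemize}
\item[(i)] If $|x-y|\ge\e$, then
$$|\varphi(x)-\varphi(y)|\le 2\|g\|_{C^r}\e^{r-k}=2\|g\|_{C^r}\e^{r-t}\e^{t-k}\le 2\|g\|_{C^r}\e^{r-t}|x-y|^{t-k}.$$
\item[(ii)] If $|x-y|<\e$, then
$$|\varphi(x)-\varphi(y)|\le C\|g\|_{C^r}|x-y|^{t-k}|x-y|^{r-t}\le C\|g\|_{C^r}\e^{r-t}|x-y|^{t-k}.$$
\end{itemize}
Hence $\|\varphi\|_{t-k}\le C\|g\|_{C^r}\e^{r-t}$. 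Multiplying $\varphi$ by the bounded $(t-k)$-H\"older factor $P_{k,k}(h_1)$ preserves this bound.

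Summing all contributions gives the claimed inequality with some constant $M'$. The loss of regularity, from $\e$ down to $\e^{r-t}$, enters only through case (ii) of this interpolation.
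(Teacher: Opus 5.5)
Your proof is correct, but it does not follow the paper's route. The paper gives no argument for this lemma; it imports it from \cite[Proposition 6.2(iii)]{dlLO}.

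You instead give a self-contained proof built from three ingredients:
\begin{itemize}
\item Fa\`a di Bruno in charts;
\item the observation that every contribution is $O(\|g\|_{C^r}\e)$ or $O(\|g\|_{C^r}\e^{r-k})$, except the $(t-k)$-H\"older seminorm of $\varphi=D^kg\circ h_1-D^kg\circ h_2$;
\item the two-scale interpolation between $|\varphi|_{C^0}\le\|g\|_{C^r}\e^{r-k}$ and the $(r-k)$-H\"older bound on $\varphi$, which is exactly where the exponent $r-t$ comes from.
\end{itemize}

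The citation buys brevity and the generality of \cite{dlLO}. Your argument makes the mechanism of the loss transparent. It also makes explicit something the printed statement suppresses: $M'$ must depend on a bound for $\|h_1\|_{C^t}$. That dependence is unavoidable. On $\R/\Z$ take a fixed non-affine smooth $g$, $h_1(x)=Nx$ and $h_2=h_1+\e$; the left side grows without bound in $N$, while the right side does not. The dependence is harmless for the paper, since in Lemma \ref{comp lemma} one has $|h_i|_{C^t}\le K$ and $M=M(r,t,\M,K)$.

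One justification needs a small correction. Consider the type (a) terms with $i<k$. The $O(\e)$ bound on the $(t-k)$-H\"older norm of $D^ig\circ h_1-D^ig\circ h_2$ uses that $D^{i+1}g$ is H\"older, not merely that $D^ig$ is $C^1$. To see this, write the difference as $\int_0^1 D^{i+1}g(sh_1+(1-s)h_2)\,ds\,(h_1-h_2)$. With only a Lipschitz bound, your Step 3 interpolation still gives $\e^{1-(t-k)}\le\e^{r-t}$, so the conclusion stands either way.
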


\begin{lemma}\label{comp lemma}
Let $k\in \N$ and $k<t<r<k+1$.
There exists  $M=M(r,t,\M,K)$ such that 
for any $g,\tilde g\in \drm$ and $h_1, h_2\in \dtm$  with $|h_1|_{C^t}, |h_2|_{C^t}\le K$, the following holds. 
\begin{equation} \label{comp 1}
d_{C^t} (g\circ h_1,\, g\circ h_2) \le 
M \left( \|g\|_{C^r}+  (1+\|g^{-1}\|_{C^t})^t\right)\cdot d_{C^t}(h_1, h_2)^{r-t},  \quad\text{and}
\end{equation} 
\vspace{-20pt}
\begin{equation} \label{comp 2}
d_{C^t} (h_1\circ g,\, h_2\circ g) \le 
M \left(\|g^{-1}\|_{C^r}+  (1+\|g\|_{C^t})^t\right)\cdot d_{C^t}(h_1, h_2)^{r-t} \quad\quad\quad 
\end{equation} 
 provided that  $d_{C^t}(h_1, h_2)<\delta |h_1|_{C^t}^{-1}$, where $\delta=\min\{\delta_0, \kappa^{-1}\delta_1 \}$ with $\delta_0, \kappa$ from  \eqref{compare 1} and $\delta_1$ from Lemma \ref{dlLO lemma 2}, and in each case  the right hand side  is less than  
\begin{equation} \label{ass}
 \delta \big(M_t (1+|h_1|_{C^t})^t  (1+|g|_{C^t})^t\big)^{-1}.
 \end{equation} 
Also
 \begin{equation} \label{dist}
\begin{aligned}
&d_{C^t} (g\circ h_1\circ \tg,\, g\circ h_2 \circ \tg)  \\
&\le M \left( \| g\|_{C^r} (1+\| \tg\|_{C^t})^t + \| \tg^{-1}\|_{C^r} (1+ \| g^{-1}\|_{C^t})^t \right)
 \cdot d_{C^t}(h_1, h_2)^{r-t}
\end{aligned}
\end{equation} 
provided that $d_{C^t}(h_1, h_2)\le \delta |h_1|_{C^t}^{-1}$ and  the right hand side 
of \eqref{dist} is less than 
\begin{equation} \label{delta 0}
\delta \left( M_t^{2}\, (1+| h_1|_{C^t})^{t}(   \|g\|_{C^t}  \, (1+\| \tg\|_{C^t})^t +
 \|\tg^{-1}\|_{C^t}   \, (1+\| g^{-1}\|_{C^t})^t )\right)^{-1}.
\end{equation}

\end{lemma}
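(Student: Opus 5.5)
We prove \eqref{comp 1} by passing to the local norms via \eqref{compare 1}, estimating the two pieces $\|g\circ h_1-g\circ h_2\|_{C^t}$ and $\|(g\circ h_1)^{-1}-(g\circ h_2)^{-1}\|_{C^t}$ separately, and then returning to $d_{C^t}$. The hypothesis $d_{C^t}(h_1,h_2)<\delta|h_1|_{C^t}^{-1}\le \delta_0|h_1|_{C^t}^{-1}$ lets us apply \eqref{compare 1} to $h_1,h_2$:
$$
\|h_1-h_2\|_{C^t}+\|h_1^{-1}-h_2^{-1}\|_{C^t}\le \kappa\, d_{C^t}(h_1,h_2)<\kappa\delta |h_1|_{C^t}^{-1}\le \delta_1 ,
$$
using $|h_1|_{C^t}\ge 1$, which we may assume by enlarging constants. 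This is exactly the smallness needed in Lemma \ref{dlLO lemma 2}.

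For the forward term, Lemma \ref{dlLO lemma 2} gives directly
$$
\|g\circ h_1-g\circ h_2\|_{C^t}\le M'\|g\|_{C^r}\,\|h_1-h_2\|_{C^t}^{r-t}.
$$
For the inverse term we write $(g\circ h_i)^{-1}=h_i^{-1}\circ g^{-1}$, so the difference is $(h_1^{-1}-h_2^{-1})\circ g^{-1}$. Here $g^{-1}$ sits on the right, so no loss of exponent occurs. A Faà di Bruno/chain rule estimate (the same computation behind Lemma \ref{dlLO lemma 1}) bounds the $C^t$ norm of a difference precomposed with $g^{-1}$ by
$$
\|(h_1^{-1}-h_2^{-1})\circ g^{-1}\|_{C^t}\le M_t\,\|h_1^{-1}-h_2^{-1}\|_{C^t}\,(1+\|g^{-1}\|_{C^t})^t .
$$
Since $\|h_1^{-1}-h_2^{-1}\|_{C^t}\le \kappa d\le C\,d^{\,r-t}$ for $d<1$, this is dominated by $C(1+\|g^{-1}\|_{C^t})^t\,d_{C^t}(h_1,h_2)^{r-t}$.

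Summing the two pieces, the local norm of the difference of $g\circ h_1$ and $g\circ h_2$ is at most the right side of \eqref{comp 1} with a suitable $M$. To convert back to $d_{C^t}$ we apply the second alternative of \eqref{compare 1} with base point $g\circ h_1$. By Lemma \ref{dlLO lemma 1},
$$
|g\circ h_1|_{C^t}\le M_t(1+|h_1|_{C^t})^t(1+|g|_{C^t})^t ,
$$
so the assumption that the right side is below \eqref{ass} ensures the local norm is less than $\delta_0|g\circ h_1|_{C^t}^{-1}$. This yields $d_{C^t}(g\circ h_1,g\circ h_2)\le\kappa\cdot(\dots)$, and absorbing $\kappa$ into $M$ gives \eqref{comp 1}.

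Inequality \eqref{comp 2} follows by symmetry. The map $h\mapsto h^{-1}$ preserves $d_{C^t}$ by definition, and
$$
(h_i\circ g)^{-1}=g^{-1}\circ h_i^{-1},
$$
so applying the argument above with $g^{-1}$ in place of $g$ exchanges the roles of $g$ and $g^{-1}$. For \eqref{dist}, we first apply the right-composition estimate to pass from $h_1,h_2$ to $h_1\circ\tg,\,h_2\circ\tg$; this gives a Lipschitz factor $(1+\|\tg\|_{C^t})^t$ on the forward part, while the inverse part $\tg^{-1}\circ h_i^{-1}$ carries a Hölder loss with factor $\|\tg^{-1}\|_{C^r}$. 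We then left-compose with $g$ in the same way. Tracking which factor multiplies which piece gives the stated combination, and \eqref{delta 0} is the smallness needed for the final application of \eqref{compare 1}, via Lemma \ref{dlLO lemma 1} applied twice.

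The main obstacle is the bookkeeping that keeps each intermediate difference inside the regime of \eqref{compare 1} and Lemma \ref{dlLO lemma 2}. The plan is to do everything in local norms and invoke \eqref{compare 1} only at the start and at the end, so that the smallness hypotheses \eqref{ass} and \eqref{delta 0} are used exactly once each.
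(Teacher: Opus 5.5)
Your proof of \eqref{comp 1} is the paper's argument: Lemma \ref{dlLO lemma 2} for the forward difference, the Lemma \ref{dlLO lemma 1}-type bound for $(h_1^{-1}-h_2^{-1})\circ g^{-1}$, and a return to $d_{C^t}$ via \eqref{compare 1}, using \eqref{ass} together with the bound on $|g\circ h_1|_{C^t}$. The paper leaves \eqref{comp 2} to symmetry and cites \cite[Lemma 3.6]{S19} for \eqref{dist}; your inversion argument, and your plan of tracking the forward and inverse pieces separately in local norms so that each incurs exactly one H\"older loss (for instance writing the forward piece as $(g\circ h_1-g\circ h_2)\circ\tg$, so the $\delta_1$-smallness already checked for $h_1,h_2$ applies), correctly fill these in.
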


\begin{proof}  The last part of the lemma was proven in \cite[Lemma 3.6]{S19}. 
For the sake of completeness,  we give a direct argument for \eqref{comp 1} using a similar approach. 
\vskip.1cm 
We use \eqref{compare 1} for  distances between close diffeomorphisms.
 Lemma \ref{dlLO lemma 2} gives an estimate for  $\|g\circ h_1 - g\circ h_2\|_{C^t}$.  
By Lemma \ref{dlLO lemma 1}, we have 
$$
  \|   h_1^{-1} \circ g^{-1}\ -  h_2^{-1} \circ g^{-1}\|_{C^t} = 
  \|  ( h_1^{-1} -   h_2^{-1}) \circ g^{-1}\|_{C^t} \le
 M_t  \|  h_1^{-1} -  h_2^{-1}\|_{C^t}(1+ \|g^{-1}\|_{C^t} )^t.
$$
 Hence 
$$
\begin{aligned}
& \|g\circ h_1 - g\circ h_2\|_{C^t} +  \|   h_1^{-1} \circ g^{-1}\ -  h_2^{-1} \circ g^{-1}\|_{C^t}\le \\
&  M' \,\|g\|_{C^r} \,\|  h_1 -  h_2\|_{C^t} ^{r-t}+ M_t\,\|h_1^{-1}- h_2^{-1}\|_{C^t}(1+\|g^{-1}\|_{C^t})^t \le \\
&M'' \left( \|g\|_{C^r} +(1+\|g^{-1}\|_{C^t})^t\right)\cdot d_{C^t}(h_1, h_2)^{r-t} =: (*).
\end{aligned}
$$
By \eqref{compare 1}, we also have $\kappa^{-1}d_{C^t}(h_1\circ g,\, h_2\circ g)\le(*)$, yielding \eqref{comp 1} with $M=\kappa M''$,  provided that $(*)<\delta_0 |g\circ  h_1|^{-1}_{C^t}$. 
To show the latter  we apply  Lemma \ref{dlLO lemma 1} to estimate 
$$ 
\begin{aligned} 
\|h_1^{-1}\circ  g^{-1}\|_{C^t} &\le M_t\,\|h_1^{-1}\|_{C^t}(1+\|g^{-1}\|_{C^t})^t \le M_t (1+|h_1|_{C^t})^t(1+\|g\|_{C^t})^t,\\
\|g\circ h_1\|_{C^t} &\le M_t (1+\|h_1\|_{C^t})^t \,\| g\|_{C^t}, \;\text{ and so} \\
|h_1\circ  g|_{C^t}& \le M_t (1+|h_1|_{C^t})^t  (1+|g|_{C^t})^t.
\end{aligned}
$$
Thus by the assumption \eqref{ass} we have  $ (*)<\delta |h_1\circ  g|^{-1}_{C^t}\le\delta_0 |h_1\circ  g|^{-1}_{C^t}.$
\end{proof}

\subsection{Holonomies}

In the proofs, a conjugacy $\Psi$ will be  constructed using holonomies of the cocycles $\A$ and $\B$.
We recall the following result on existence and properties of holonomies for diffeomorphism cocycles. 

\begin{proposition}  \cite[Proposition 3.3 and Remark 3.4]{S19}
\label{holonomies} $\;$ \\
Let $k\in \N$ and $k<r<q<k+1$, and let   $\la$ be  as in \eqref{Anosov def}. 
\vskip.1cm 
 Let $\A$ be a is $\beta$-H\"older  $\dqm$-valued cocycle over a hyperbolic  system $(X,f)$. 
Suppose that there exist  $K>0$ and $\sigma>1$ satisfying $\sigma^{2(1+ r)} \la^{\beta(q-r)}<1$  such that 
\begin{equation} \label{C^q bunching}
  |\A_x^n|_{C^q} \le K \sigma^n \;\text{ for all } n\in \N.
\end{equation}

\noindent Then for any $x\in X$ and $y\in W^s(x)$,
the limit 
\begin{equation} \label{holonomy def}
H^s_{x,y}=H^{\A,s}_{x,y}=\lim_{n\to\infty}\, (\A^n_y)^{-1} \circ \A^n_x \quad\text{exists in $\drm$ and satisfies}
\end{equation}

\begin{itemize}
\item[{\bf (H1)}]  $H^s_{x,x}=\Id\,$ and $\,H^s_{y,z} \circ H^s_{x,y}=H^s_{x,z}$,\,\,
which imply $(H^s_{x,y})^{-1}=H^s_{y,x};$
\vskip.1cm
\item[{\bf (H2)}]  $H^s_{x,y}= (\A^n_y)^{-1}\circ H^s_{f^nx ,\,f^ny} \circ \A^n_x\;$ 
for all $n\in \N$;
\vskip.1cm
\item[{\bf (H3)}] There exists a constant $c$ such that for all  $x\in X$   and $y\in W^{s}_{\text{loc}}(x),$
\begin{equation} \label{Anx}
\begin{aligned}
& d_{C^r}(H^s_{\,x,y},\Id) \leq c\,\dist (x,y)^{\beta (q-r)},\quad\text{moreover,}\\
& d_{C^r}  \left( (\A^n_y)^{-1} \circ \A^n_x, \, \Id \right) \le  c\,  \dist(x,y)^{\beta(q-r)} 
  \quad\text{for all }n\in \N.
  \end{aligned}
\end{equation}
\end{itemize}

 \end{proposition}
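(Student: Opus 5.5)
The plan is to prove the limit \eqref{holonomy def} exists by showing that $g_n:=(\A^n_y)^{-1}\circ\A^n_x$ is Cauchy in $\drm$ for $y\in W^s_{loc}(x)$. Local holonomies are then extended to all of $W^s(x)$ using (H2) as a definition. The estimate giving the Cauchy property also gives (H3).

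\textbf{Telescoping.} For $y\in W^s_{loc}(x)$ write
$$
g_{n+1}=(\A^n_y)^{-1}\circ h_n\circ \A^n_y\circ g_n,\qquad h_n:=(A(f^ny))^{-1}\circ A(f^nx).
$$
Since $A$ is $\beta$-H\"older into $\dqm$ and $\dist(f^nx,f^ny)\le\la^n\dist(x,y)$, we get
$$
d_{C^q}(h_n,\Id)\le c_1\,\la^{n\beta}\dist(x,y)^\beta.
$$
Apply estimate \eqref{dist} of Lemma \ref{comp lemma}, with the pair $(r,q)$ in place of $(t,r)$, taking $g=(\A^n_y)^{-1}$, $\tg=\A^n_y$, $h_1=h_n$ and $h_2=\Id$. (If Lemma \ref{comp lemma} is only available for exponents in one interval $(k,k+1)$, this is fine, since $k<r<q<k+1$.) Together with the bunching bound $|\A^n_y|_{C^q}\le K\sigma^n$, this gives
$$
d_{C^r}\big((\A^n_y)^{-1}\circ h_n\circ\A^n_y,\ \Id\big)\le C\,\sigma^{n}(K\sigma^n)^{r}\,\big(\la^{n\beta}\dist(x,y)^\beta\big)^{q-r}\le C'\,\theta^n\dist(x,y)^{\beta(q-r)},
$$
where $\theta:=\sigma^{1+r}\la^{\beta(q-r)}$. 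The hypothesis $\sigma^{2(1+r)}\la^{\beta(q-r)}<1$ leaves room for the second factor of $\sigma^{1+r}$ needed in the next step.

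\textbf{Right composition with $g_n$.} Passing from $d_{C^r}(h'_n,\Id)$, where $h'_n:=(\A^n_y)^{-1}\circ h_n\circ \A^n_y$, to $d_{C^r}(h'_n\circ g_n,g_n)$ costs at most a factor polynomial in $|g_n|_{C^r}$, by Lemma \ref{dlLO lemma 1} and \eqref{compare 1}. A crude bound is $|g_n|_{C^r}\lesssim\sigma^{n(1+r)}$, which yields a summable geometric term $\big(\sigma^{2(1+r)}\la^{\beta(q-r)}\big)^n$. Better still, I would run an induction: if $d_{C^r}(g_m,\Id)\le c\,\dist(x,y)^{\beta(q-r)}$ for $m\le n$, then $g_n$ is uniformly bounded, and the increments sum to $\le c\,\dist(x,y)^{\beta(q-r)}$ for a suitable $c$. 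This closes the induction and gives the second line of \eqref{Anx}.

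The main obstacle is bookkeeping the smallness hypotheses (\eqref{compare 1} and \eqref{delta 0}) required to apply these estimates. They hold automatically for $n$ large, because the right sides decay geometrically. For the finitely many initial $n$ and for $W^s_{loc}$ small, they hold because $\dist(x,y)$ is small; the local stable size may be shrunk, and the general case recovered via (H2).

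\textbf{Limit, properties, and global definition.} Summability of the increments shows $(g_n)$ is Cauchy in $d_{C^r}$, so $H^s_{x,y}$ exists in $\drm$ and satisfies the first line of \eqref{Anx}.

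For (H2), note the exact identity $(\A^{n+m}_y)^{-1}\circ\A^{n+m}_x=(\A^n_y)^{-1}\circ\big[(\A^m_{f^ny})^{-1}\circ\A^m_{f^nx}\big]\circ\A^n_x$. Let $m\to\infty$ and use continuity (not uniform continuity) of composition in the $C^r$ topology with the fixed $C^q$ maps $\A^n_x,\A^n_y$.

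For (H1), $H_{x,x}=\Id$ is trivial. The cocycle identity follows from the exact identity $g_n^{y,z}\circ g_n^{x,y}=g_n^{x,z}$ and continuity of composition in $\drm$.

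For $y\in W^s(x)$ not local, choose $N$ with $f^Ny\in W^s_{loc}(f^Nx)$ and set $H^s_{x,y}:=(\A^N_y)^{-1}\circ H^s_{f^Nx,f^Ny}\circ\A^N_x$. This equals the limit \eqref{holonomy def} by the same identity, so it is independent of $N$.
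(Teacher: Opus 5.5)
\textbf{There is a genuine gap in the step ``Right composition with $g_n$''.} The metric $d_{C^r}$ measures inverses as well as the maps themselves (see \eqref{compare 1}). Since $(h'_n\circ g_n)^{-1}=g_n^{-1}\circ (h'_n)^{-1}$, the inverse half of $d_{C^r}(h'_n\circ g_n,g_n)$ is a \emph{left} composition by $g_n^{-1}$. Lemma \ref{dlLO lemma 1} only handles the forward half $\|(h'_n-\Id)\circ g_n\|_{C^r}$.

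Left composition by a map controlled only in $C^r$ is not Lipschitz in $C^r$; for non-integer $r$ it need not even be continuous. Lemma \ref{dlLO lemma 2} gives only the bound $\|g_n^{-1}\|_{C^q}\,\|(h'_n)^{-1}-\Id\|_{C^r}^{\,q-r}$, which is exactly the loss recorded in \eqref{comp 2}. Your induction controls $g_n$ only in $C^r$. The only available bound on $\|g_n^{-1}\|_{C^q}$ is of order $\sigma^{n(1+q)}$, and the hypothesis does not make $\sigma^{n(1+q)}\theta^{n(q-r)}$ summable. So the induction does not close as written.

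The ``crude'' variant fails even for the forward half. The factor from Lemma \ref{dlLO lemma 1} is $(1+|g_n|_{C^r})^r\sim\sigma^{nr(1+r)}$, not $\sigma^{n(1+r)}$. This gives $(\sigma^{(1+r)^2}\la^{\beta(q-r)})^n$, which $\sigma^{2(1+r)}\la^{\beta(q-r)}<1$ does not control since $r>1$.

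\textbf{How to repair it.} The route behind the cited result (this is what the two-sided estimate \eqref{dist}, i.e.\ \cite[Lemma 3.6]{S19}, is built for) does not split off $g_n$ at all. Write
\[
g_{n+1}=(\A^{n+1}_y)^{-1}\circ A(f^nx)\circ\A^n_x,\qquad g_n=(\A^{n+1}_y)^{-1}\circ A(f^ny)\circ\A^n_x .
\]
Apply \eqref{dist} with $(r,q)$ in place of $(t,r)$, $g=(\A^{n+1}_y)^{-1}$, $\tg=\A^n_x$, $h_1=A(f^nx)$, $h_2=A(f^ny)$. This gives $d_{C^r}(g_{n+1},g_n)\le C\,\theta^n\dist(x,y)^{\beta(q-r)}$ in one step.
\begin{itemize}
\item Inverses are handled automatically, because \eqref{dist} uses $C^q$ bounds on both $g$ and $\tg^{-1}$. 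That is where the $C^q$ bunching enters.
\item The square in $\sigma^{2(1+r)}\la^{\beta(q-r)}<1$ is what makes the proviso \eqref{delta 0} hold for every $n$ once $\dist(x,y)$ is small, since its threshold decays like $\sigma^{-n(1+r)}$. It is not consumed by any right composition.
\item Summing the increments with the triangle inequality for $d_{C^r}$ gives existence of the limit and both lines of (H3).
\end{itemize}
Alternatively, you can keep your induction, but treat the inverses as their own right-composition sequence with $x$ and $y$ interchanged: $g_{n+1}^{-1}=[(\A^n_x)^{-1}\circ h_n^{-1}\circ\A^n_x]\circ g_n^{-1}$.

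\textbf{Two smaller slips.} First, your bound $d_{C^q}(h_n,\Id)\le c_1\la^{n\beta}\dist(x,y)^\beta$ does not follow from H\"older continuity of $A$ in $d_{C^q}$, because $h_n$ involves left composition by $A(f^ny)^{-1}$ in $C^q$. The choice $h_1=A(f^nx)$, $h_2=A(f^ny)$ above avoids this. Second, composition is not continuous in $\drm$ for non-integer $r$, as the introduction notes. So prove the cocycle identity in (H1) by passing to the limit in $C^0$, using uniform Lipschitz bounds. Your (H2) argument, which only composes with fixed $C^q$ maps, is fine.
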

 
The maps  $H^s_{x,y}$ are called the {\it stable holonomies of $\A$}.
\vskip.1cm 
\noindent The {\it unstable holonomies} are defined similarly: for $y\in W^u(x)$,
$$
H^u_{x,y}=H^{\A,u}_{x,y}=\lim_{n\to-\infty} (\A^n_y)^{-1} \circ \A^n_x \,=\, 
\lim_{n\to\infty}\, (\A^n_{f^{-n}y})\circ (\A^n_{f^{-n}x })^{-1} .
$$
They exist under the same assumptions and satisfy similar properties.
\vskip.2cm


\section{Proof of Theorem \ref{conjugate data}}

\subsection{Reduction to the case of $\A^k_{p_0}=\B^k_{p_0}$ and $\Phi(p_0)=\Id$}$\;$
\vskip.1cm\noindent
Let $k$ be the prime period of $p_0$.
Instead of the cocycle $\B$ we will work with 
$$\,\tB =\Phi_ {p_0}\circ \B \circ \Phi_ {p_0}^{-1},\,\;\text{  so that }\;\tB^k_{p_0} =\A^k_{p_0}.$$ 
Since $\Phi_ {p_0}$ is in $ \dqm$, so is $\tilde B$. Since $\B$ is H\"older continuous with exponent $\beta$, it follows from Lemma \ref{comp lemma} \eqref{dist} that  $\tB$ is H\"older continuous with exponent 
 $\beta(q-r)$.
 
 By Proposition \ref{holonomies}, the original cocycle $\B$ has holonomies in $\drm$ satisfying (H1, H2, H3). The cocycle $\tilde \B$ also has holonomies in $\drm$ given by 
 $$
   H^{\tB,s}_{x,y} =\Phi_ {p_0}\circ H^{\B,s}_{x,y} \circ \Phi_ {p_0}^{-1} \quad\text{ and }\quad
   H^{\tB,u}_{x,y} =\Phi_ {p_0}\circ H^{\B,u}_{x,y} \circ \Phi_ {p_0}^{-1}.
 $$
They satisfy (H1) and (H2), but the property (H3) does not necessarily hold for $H^{\tB,s}$.
However,  by (H3) and \eqref{d_0 1} we have 
 \vskip.2cm
 {\bf (H$\tilde 3$)}   There exists a constant $\tilde c$ such that for all $x\in X$ and $y\in W_{loc}(x)$
  \begin{equation} \label{Anx B0}
d_{C^0}(H^{\tB,s}_{\,x,y},\Id) \leq \tilde c\,\,\dist (x,y)^{\beta (q-r)}.
\end{equation}
Also,  by (H3) and Lemma \ref{comp lemma} \eqref{dist}, 
 \vskip.2cm
 {\bf (H$\bar3$)}  There exists a constant $\bar c$ such that for all $x\in X$ and $y\in W_{loc}(x)$
  \begin{equation} \label{Anx Bt}
 |H^{\tB,s}_{\,x,y}|_{C^r} \le \bar c \quad\text{and} \quad d_{C^t}(H^{\tB,s}_{\,x,y},\Id) \leq \bar c\,\,\dist (x,y)^{\beta (q-r)(r-t)}.
\end{equation}

 The function $\tilde \Phi_p=\Phi_p \circ \Phi_ {p_0}^{-1}$ is a periodic data conjugacy between  $\A$ and $\tilde \B$ with $\tilde \Phi_ {p_0}=\Id$\,  satisfying \eqref{Hold at p_0}.
 
We will construct a H\"older continuous conjugacy $\tilde \Psi:X\to \dtm$ between $\A$ and $\tilde \B$ with the exponent $\beta(q-r)(r-t)^2$. Then $\Psi= \tilde \Psi \circ \Phi_ {p_0}$ will be a conjugacy between $\A$ and $\B$ with values in $\dtm$, H\"older continuous with exponent  $\beta(q-r)(r-t)^2(q-t)$.
\vskip.1cm
 If $\Phi_ {p_0}=\Id$ for the original cocycles, the reduction is not needed and the conjugacy
 $\Psi$ will be H\"older continuous with exponent  $\beta(q-r)(r-t)$.

\vskip.2cm

Now we construct a conjugacy $\tilde \Psi$ between $\A$ and $\tilde \B$. 
To simplify the notations, we will write $\B$ for $\tilde \B$,\, $\Phi$ for $\tilde \Phi$,\, and $\Psi$ for $\tilde \Psi$. Thus,
\vskip.2cm

\noindent {\bf In the rest of the proof of the theorem,}
\vskip.1cm
\noindent $\bullet$\, $\A$ and $\B$ are $\dqm$-valued cocyles over $(X,f)$ \,with\,   $\B^k_{p_0} =\A^k_{p_0} $;
\vskip.1cm
\noindent $\bullet$\, $\A$ and $\B$ have stable and unstable holonomies in $\drm$, 
\vskip.1cm
the holonomies $H^{\A,s/u}$ for $\A$ satisfying (H1,\,H2,\,H3),
\vskip.1cm
the holonomies $H^{\B,s/u}$ for $\B$ satisfying (H1,\,H2,\,H$\tilde 3$,\,H$\bar 3$);
\vskip.1cm
\noindent $\bullet$\,  $\Phi_p$ is a periodic data conjugacy between  $\A$ and $B$ with $\Phi_ {p_0}=\Id$\,  satisfying 
\begin{equation}\label{Hold at p_0 _0}
d_{C^0}(\Phi_p, \Id) 
 \le c\, \dist (p,p_0)^\beta \quad\text{for every periodic point $p$ of $f$ close to $p_0$.}
\end{equation}
\vskip.2cm 


First we assume that $p_0$ is a {\bf fixed point} of $f$  and construct a conjugacy between cocycles $\A$ and $\B$ with $\A_{p_0}=\B_{p_0}$ . We remove this assumption in Section \ref{remove}.
\vskip.2cm

\subsection{Construction of a conjugacy $\Psi$ on the homoclinic set of a fixed point.} \label{Th 1 constr}
We construct conjugacies between $\A$ and $\B$
 along the stable and unstable manifolds of $p_0$ and show that they coincide at every homoclinic point of $p_0$. We set
\begin{equation}\label{Psi su conj}
\begin{aligned}
 &\Psi^s_x=H^{\A,s}_{p_0,\,x}\circ H^{\B,s}_{x,\,p_0} \quad\text{for }x\in W^s(p_0),\\
 &\Psi^u_x=H^{\A,u}_{p_0,\,x}\circ H^{\B,u}_{x,\,p_0} \quad\text{for }x\in W^u(p_0).
 \end{aligned}
\end{equation}
 The function $\Psi^s$ is a conjugacy between $\A$ and $\B$ along the leaf $W^s(p_0)$. Indeed,
 If $x\in W^s(p_0)$, then using property (H2) of the holonomies with $n=1$ we obtain
 $$
 \begin{aligned}
&\Psi^s_{f(x)}\circ \B_x\circ \Psi^s(x)^{-1}\,=\, 
H^{\A,s}_{f(p_0),\,f(x)}\circ H^{\B,s}_{f(x),\,f(p_0)} \circ  \B_x\circ (H^{\B,s}_{x,p_0})^{-1} \circ (H^{\A,u}_{p_0,x})^{-1}\,=\\
&H^{\A,s}_{f(p_0),\,f(x)}\circ \B_{p_0}\circ (H^{\A,u}_{p_0,x})^{-1} \,=\, H^{\A,s}_{f(p_0),\,f(x)}\circ \A_{p_0}\circ (H^{\A,u}_{p_0,x})^{-1} \,=\,\A_x.
\end{aligned}
 $$
Similarly,  $\Psi^u$ is a conjugacy between $\A$ and $\B$ along the leaf $W^u(p_0)$.
\vskip.2cm

Let $x\in W^s(p_0)\cap W^u(p_0)$. We will show that $\Psi^s_x=\Psi^u_x$, equivalently, 
$$
H^{\B,u}_{p_0,x} \circ H^{\A,u}_{x,p_0}\circ H^{\A,s}_{p_0,\,x}\circ H^{\B,s}_{x,\,p_0}=\Id.
$$
Using the definition of the holonomies and the assumption that $\A_{p_0}=\B_{p_0}$, we obtain
$$
H^{\A,s}_{p_0,\,x} \circ H^{\B,s}_{x,\,p_0} =
\underset{n\to\infty}{\lim}
 \left(  (\A^n_x)^{-1} \circ \A^n_{p_0} \circ (\B^n_{p_0})^{-1} \circ  \B^n_x \right)=
 \underset{n\to\infty}{\lim}
 \left(  (\A^n_x)^{-1}\circ  \B^n_x \right)
$$
and  
$$
H^{\B,u}_{p_0,\,x} \circ H^{\A,u}_{x,\,p_0} =
 \underset{n\to\infty}{\lim}
 \left(   \B^n_{f^{-n}x} \circ  (\A_{f^{-n}x}^n)^{-1} \right).
$$
Thus,
$$
\begin{aligned}
H^{\B,u}_{p_0,\,x} \circ H^{\A,u}_{x,\,p_0} \circ H^{\A,s}_{p_0,\,x} \circ H^{\B,s}_{x,\,p_0}
 & =  \underset{n\to\infty}{\lim}
 \left(  \B^n_{f^{-n}x} \circ  (\A_{f^{-n}x}^n)^{-1} \circ  (\A^n_x)^{-1}\circ  \B^n_x  \right)=\\
&  =  \underset{n\to\infty}{\lim}
 \left(  \B^n_{f^{-n}x}\circ  (\A^{2n}_{f^{-n}x})^{-1}\circ  \B^n_x  \right).
  \end{aligned}
 $$
We will show that  $d_0(\B^n_{f^{-n}x}\circ  (\A^{2n}_{f^{-n}x})^{-1}\circ  \B^n_x,\, \Id)\to 0$. 
\vskip.1cm

We  consider an orbit segment $\{f^{-n}x, \dots,x, \dots, f^{n}(x)\}$.
Since
$$
\dist (f^{-n}(x), f^{n}(x))\le \dist (f^{-n}(x), {p_0})+\dist (f^{n}(x),{p_0})\le K_x \la^{n} 
\quad \text{for all $n\in \N$},
$$
for all sufficiently large $n$ we have
$$\delta_n= \dist (f^{-n}(x),f^{n}(x))\le K_x \la^{n} <\delta',
$$ 
where $\delta'$ is as in the Anosov Closing Lemma  \ref{closing}.
Hence there exists a periodic point $p_n=f^{2n}(p_n)$ such that 
such that 
$$
\dist (f^i (x), f^i (p_n)) \le c'  \delta_n\;\,\text{ for every $i=-n, \dots , n$.}
$$
To simplify the notations we will write $p$ for $p_n$. 
\vskip.1cm 
We will prove that 
$$
\begin{aligned}
& d_0(\B^n_{f^{-n}x}\circ (\A^{2n}_{f^{-n}x})^{-1}\circ  \B^n_x,\,\,\,\B^n_{f^{-n}p}\circ (\A^{2n}_{f^{-n}p})^{-1}\circ  \B^n_p)\to 0 \quad\text{and}\quad\\
& d_0(\B^n_{f^{-n}p}\circ  (\A^{2n}_{f^{-n}p})^{-1}\circ  \B^n_p,\, \Id)\to 0.
\end{aligned}
$$
We write  $\B^n_{f^{-n}x}$,  $\B^n_x$, and   $(\A^{2n}_{f^{-n}x})^{-1}$ in terms of the values at $p$.  
Let 
$$
y=y_n=W^s_{loc}(p)\cap W^u_{loc}(x).
$$ 
Then for each $i=-n,\dots, n$, 
$$
\dist (f^i (x), f^i (y)) \le c_1\delta_n\; \text{ and }\;\dist (f^i (y), f^i (p)) \le c_1\delta_n.
$$
By property (H2) of the holonomies  we have
$$
\B^{n}_x\,=\,  H^{\B,u}_{f^{n}y,\,f^{n}x}\circ \B^n_y \circ H^{\B,u}_{x,\,y}\,=\,
  H^{\B,u}_{f^{n}y,\,f^{n}x}\circ H^{\B,s}_{f^{n}p,\,f^{n}y}\circ 
\B^n_p \circ H^{\B,s}_{y,\,p} \circ H^{\B,u}_{x,\,y}.
$$
By property (H3), for each of the holonomies above we have 
$$
d_{C^0}(H,\Id) \le \tilde c(c_1\delta_n)^{\beta(q-r)}=c_2\la^{n\beta(q-r)},
$$
and by \eqref{d_0 3} it follows that 
$$
\B^{n}_x=H_1\circ \B^n_p \circ H_2, \;\text{ where }\; d_0(H_1,\Id)\le  2c_2\,\la^{n\beta(q-r)}
\;\text{ and }\; d_0(H_2,\Id)\le 2c_2\,\la^{n\beta(q-r)}.
$$
By the bunching assumption, 
$$
\|D\B^n_p\| \le K \sigma^n \quad\text{where }\; \sigma^{2(1+r)} \la^{\beta(q-r)}<1.
$$
Using \eqref{d_0 2} we estimate
$$
\begin{aligned}
d_0(\B^{n}_x,\, \B^n_p)&=\,d_0(H_1\circ \B^n_p \circ H_2,\, \B^n_p) \,\le\,  d_0(H_1, \Id)+ \|D\B^n_p\|\cdot  d_0(H_2, \Id)\\
& \le  2c_2\,\la^{n\beta(q-r)} + K \sigma^n \cdot 2c_2\,\la^{n\beta(q-r)} \le 
\, c_3\, \sigma^n \la^{n\beta(q-r)}.
\end{aligned}
$$
Similarly, 
$$
d_0(\B^n_{f^{-n}x},\,\B^n_{f^{-n}p} )\le c_3 \,  \sigma^n  \la^{n\beta(q-r)}  \;\;\text{ and }\;\;
d_0((\A^{2n}_{f^{-n}x})^{-1},\, (\A^{2n}_{f^{-n}p})^{-1}) \le c_3 \,  \sigma^{2n}  \la^{n\beta(q-r)}.
$$
Using \eqref{d_0 5} we deduce
$$
\begin{aligned}
&d_0(\B^n_{f^{-n}x}\circ  (\A^{2n}_{f^{-n}x})^{-1}\circ  \B^n_x,\,\, \B^n_{f^{-n}p}\circ  (\A^{2n}_{f^{-n}p})^{-1}\circ  \B^n_p) \le\\
& d_0(\B^n_{f^{-n}x}, \B^n_{f^{-n}p}) + \|D\B^n_p\| \, d_0((\A^{2n}_{f^{-n}x})^{-1}, (\A^{2n}_{f^{-n}p})^{-1}) \\
&\hskip3cm + \|D\B^n_p\|\, \|D(\A^{2n}_{f^{-n}p})^{-1}\|\, d_0(\B^{n}_x, \B^n_p) \le\\
&c_3\, \sigma^n  \la^{n\beta(q-r)} + K \sigma^n c_3\, \sigma^{2 n}  \la^{n\beta(q-r)} 
+K \sigma^n K \sigma^{2n} c_3\, \sigma^n \la^{n\beta(q-r)}
\le \,c_4\,  \sigma^{3 n}   \la^{n\beta(q-r)}.
\end{aligned}
$$
Since $r>1$ we have  $\sigma^{3}\la^{\beta(q-r)}<\sigma^{2(1+ r)} \la^{\beta(q-r)}<1$,\, and hence  $\,\sigma^{3n}   \la^{n\beta(q-r)}\to 0$\, as $n\to\infty$.

\vskip.2cm 
Next we consider $\B^n_{f^{-n}p} \circ  (\A_{f^{-n}p}^{2n})^{-1} \circ  \B^n_p$.\, We observe that 
$$
 (\A_{f^{-n}p}^{2n})^{-1} \,= \, 
 \Phi_{f^{-n}p}\circ (\B_{f^{-n}p}^{2n})^{-1} \circ \Phi_{f^{-n}p}^{-1}, 
$$
where 
$$
d_{C^0}\left(\Phi_{f^{-n}p}, \Id) = d_{C^0}(\Phi_{f^{-n}p}, \Phi_{p_0}\right)  
 \le c\, \dist (f^{-n}p,p_0)^{\beta} \le c \la^{n\beta} .
$$
Then using \eqref{d_0 2} we obtain
$$
\begin{aligned}
&d_0((\A_{f^{-n}p}^{2n})^{-1},\, (\B_{f^{-n}p}^{2n})^{-1}) = d_0( \Phi_{f^{-n}p}\circ (\B_{f^{-n}p}^{2n})^{-1} \circ \Phi_{f^{-n}p}^{-1},\, (\B_{f^{-n}p}^{2n})^{-1}) \le\\
& d_0( \Phi_{f^{-n}p}, \Id)+  \|D(\B_{f^{-n}p}^{2n})^{-1} \|\,d_0(\Phi_{f^{-n}p}^{-1},\Id)\le 
c \la^{n\beta} +  K  \sigma^{2 n} c \la^{n\beta} \le  c_5 \, \sigma^{2 n} \la^{n\beta}.
\end{aligned}
$$
Finally, using \eqref{d_0 1} we estimate
$$
\begin{aligned}
& d_0( \B^n_{f^{-n}p} \circ  (\A_{f^{-n}p}^{2n})^{-1} \circ  \B^n_p, \,\, \Id)=\\
& d_0( \B^n_{f^{-n}p} \circ  (\A_{f^{-n}p}^{2n})^{-1} \circ  \B^n_p,\,\,\B^n_{f^{-n}p} \circ  (\B_{f^{-n}p}^{2n})^{-1} \circ  \B^n_p)=\\
& d_0( \B^n_{f^{-n}p} \circ  (\A_{f^{-n}p}^{2n})^{-1},\,\,\B^n_{f^{-n}p} \circ  (\B_{f^{-n}p}^{2n})^{-1})\le \\
 &  \|D \B^n_{f^{-n}p}\|\, d_0((\A_{f^{-n}p}^{2n})^{-1},\, (\B_{f^{-n}p}^{2n})^{-1}) 
 \le\,c_6 \, \sigma^{3 n}   \la^{n\beta} \to 0 \quad\text{as }n\to\infty. 
\end{aligned}
$$
Therefore 
$$d_0(\B^n_{f^{-n}x}\circ  (\A^{2n}_{f^{-n}x})^{-1}\circ  \B^n_x,\, \Id)\to 0  \quad\text{as }n\to\infty,
$$
and we conclude that 
 $\Psi^s_x=\Psi^u_x.$
\vskip.2cm 


\subsection{Extending the conjugacy $\Psi$ to $X$.}$\;$\\
We denote the homoclinic set of the fixed point $p_0$ by $S$, that is, 
$$S=W^s(p_0)\cap W^u(p_0).$$
 For each $x\in S$, we define 
$$
\Psi_x=\Psi^s_x=\Psi^u_x= H^{\A,s}_{p_0,\,x}\circ H^{\B,s}_{x,\,p_0} =H^{\A,u}_{p_0,\,x}\circ H^{\B,u}_{x,\,p_0}.
$$

The homoclinic set $S$  is invariant under $f$ and  dense in $X$ \cite{Bo}. 
The function $\Psi:S\to \drm$ conjugates the cocycles on $S$.
We will show that $\Psi$ is H\"older continuous as function from $S$ to $\dtm$. Then it  extends to a H\"older continuous function from $X$ to $\dtm$
that conjugates the cocycles. 

First we establish boundedness of $\Psi$ on $S$ in $C^r$. We take a sufficiently small $\e>0$,  let $S'$ be a finite $\e$-dense subset of $S$, and let 
$$
 M_{S'}=\max \{\|\Psi_w\|_{C^r} : \, w\in S'\}.
$$ 
Let $x\in S$. Then there is $w\in S'$ with $\dist(x,w)<\e$, and we let $z=W^u_{loc}(x)\cap W^s_{loc}(w)$.
Then $z\in S$ and
$$
   \Psi_x= H^{\A,u}_{z,\,x}\circ  \Psi_z \circ H^{\B,u}_{x,\,z} = 
   H^{\A,u}_{z,\,x}\circ H^{\A,s}_{w,z}\circ  \Psi_w \circ H^{\B,s}_{w,\,z} \circ H^{\B,u}_{x,\,z}.
$$
By property (H3) and (H$\bar 3$) of the holomomies, there exists a constant $M_H$ such that 
$$
 \|H^{\A,u/s}_{x,\,y}\|_{C^r}\le M_H\;\text{ and }\; \|H^{\B,u/s}_{x,\,y}\|_{C^r}\le M_H
 \;\text{ whenever $y\in W^{s/u}_{loc}(x).$}
$$
Applying Lemma \ref{dlLO lemma 1}  four times we obtain
$$
\begin{aligned}
&\|\Psi_x\|_{C^r} \,=\, \|H^{\A,u}_{z,\,x}\circ H^{\A,s}_{w,z}\circ  \Psi_w \circ H^{\B,s}_{w,\,z} \circ H^{\B,u}_{x,\,z}\|_{C^r}\le \\
& M_r \,\|H^{\A,u}_{z,\,x}\circ H^{\A,s}_{w,z}\circ  \Psi_w \circ H^{\B,s}_{w,\,z}\|_{C^r}\cdot 
(1+\|H^{\B,u}_{x,\,z}\|_{C_r})^r \le  \\
&M_r^2 \,\|H^{\A,u}_{z,\,x}\circ H^{\A,s}_{w,z}\circ  \Psi_w \|_{C_r}\cdot(1+\| H^{\B,s}_{w,\,z}\|_{C^r})^r\cdot 
(1+\|H^{\B,u}_{x,\,z}\|_{C_r})^r \le \dots \le \\
& M_r^4 \cdot M_H\cdot (1+M_{s'})^r\cdot  (1+M_H)^{3r} =:M_S.
\end{aligned} 
$$
Similarly, $\|\Psi_x^{-1}\|_{C^r} \le M_S.$
\vskip.2cm

Let $x$ and $y$ be two sufficiently close points in $S$. 
The point $z=W^u_{loc}(x)\cap W^s_{loc}(y)$ is also in $S$, and 
$\max \{ \dist(x,z),\dist(y,z)\} \le c_7 \,\dist(x,y).$

Denoting $H^{\A,u}_{z,\,x}\circ  \Psi_z$ by $g$ and  applying Lemma \ref{comp lemma} we obtain
$$
\begin{aligned}
& d_{C^t}(\Psi_x, \Psi_z)=d_{C^t} (H^{\A,u}_{z,\,x}\circ  \Psi_z \circ H^{\B,u}_{x,\,z},\, \Psi_z)\le \\
& d_{C^t} (H^{\A,u}_{z,\,x}\circ  \Psi_z \circ H^{\B,u}_{x,\,z},\,  H^{\A,u}_{z,\,x}\circ \Psi_z\circ\Id) +
d_{C^t} (H^{\A,u}_{z,\,x}\circ \Psi_z, \,\Id\circ \Psi_z))\le \\
& M\left(\|g\|_{C^r}+(1+\|g^{-1}\|_{C^r})^r\right)\cdot d_{C_t}(H^{\B,u}_{x,\,z}, \Id)^{r-t}\,+\\
& \hskip3cm  +M(\left(\|\Psi_z\|_{C^r}+(1+\|\Psi_z^{-1}\|_{C^r}\right)^r)\cdot d_{C_t}(H^{\A,u}_{z,\,x}, \Id)^{r-t}.
\end{aligned}
$$
Since $\|H^{\A,u}_{z,\,x}\|_{C^r}\le M_H$ and  $\|\Psi_x\|_{C^r}\le M_S$, the norm  $\|g\|_{C^r}$ is bounded by Lemma \ref{dlLO lemma 1}. The ``provided that" condition in Lemma \ref{comp lemma} holds since  the quantity \eqref{ass} is uniformly bounded away from 0, and by taking points $x$ and $y$, and hence $x$ and $z$, sufficiently close we can make  $d_{C_t}(H^{\A,u}_{z,\,x}, \Id)$ and $d_{C_t}(H^{\B,u}_{z,\,x}, \Id)$ as close to 0 as needed.

Using  properties (H3) and (H$\bar 3$) of the holomomies of $\A$ and $\B$ respectively we obtain
$$  
 d_{C^t}(\Psi_x, \Psi_z)\le M'd(x,z)^{\beta(q-r)(r-t)^2} \le M''d(x,y)^{\beta(q-r)(r-t)^2}.
$$

We conclude that $\Psi:S\to \dtm$ is H\"older continuous on $S$ with the exponent $\beta(q-r)(r-t)^2$. Hence extends to a H\"older continuous function $\Psi:X\to \dtm$ with the same exponent,  and the extension conjugates the cocycles.
\vskip.3cm


\subsection{Removing the fixed point assumption}\label{remove}$\;$\\
Now suppose that $p_0$ is a periodic point of $f$ with minimal period $k>1$, and so $p_0$ is fixed by $f^k$.

We consider the iterated cocycles $\A^k$ and $\B^k$  over $f^k$ with  $\A^k_{p_0}=\B^k_{p_0}$ and $\Phi_{p_0}=\Id$. The arguments in Sections 3.2 and 3.3  give a H\"older continuous conjugacy $\bar \Psi:X\to \dtm$ between 
$\A^k$ and $\B^k$ with $\bar \Psi_{p_0}=\Id$.
We will show that $\bar \Psi$ is also a conjugacy between $\A$ and $\B$
following the approach of \cite{BK, Sch}. 
\vskip.1cm

We consider a point $x\in W^s(p_0) \cap W^u(f^{k-1}(p_0))$, so that $f(x)\in  W^u(p_0)$. 
We will show that
\begin{equation}\label{Ax}
\bar\Psi_x= \A_x ^{-1} \circ \bar\Psi_{f(x)}\circ \B_x , \quad \text{that is,}\quad 
  \A_x=\bar\Psi_{fx}\circ \B_x\circ \bar\Psi_x^{-1}.
\end{equation}
Since such points $x$ are dense in $X$ and all the functions are continuous,
it follows that the equation holds for all $x\in \M$.

Now we prove \eqref{Ax}. Using equations \eqref{Psi su conj} we obtain 
$$
\begin{aligned}
\bar\Psi_x &= \bar\Psi^s_x =H^{\A^k,s}_{p,\,x}\circ H^{\B^k,s}_{x,\,p} = 
\\ & =\underset{n\to\infty}{\lim} \,(\A^{nk}_x)^{-1} \circ \A^{nk}_p \circ  (\B^{nk}_p)^{-1} \circ \B^{nk}_x
=\underset{n\to\infty}{\lim} \, (\A^{nk}_x)^{-1} \circ  \B^{nk}_x, \quad\text{and}\\
\bar\Psi_{fx} &= \bar\Psi^u_{fx}=H^{\A^k,u}_{p,\,fx}\circ H^{\B^k,u}_{fx,\,p} =
\underset{n\to\infty}{\lim} \,\A^{nk}_{f^{-nk}fx} \circ (\B^{nk}_{f^{-nk}fx})^{-1}  \\
&= \underset{n\to\infty}{\lim} \,\A^{nk}_{f^{-nk+1}x} \circ (\B^{nk}_{f^{-nk+1}x})^{-1}.
 \end{aligned}
$$
Then  
$$
   \A_x ^{-1} \circ \bar\Psi_{fx}\circ \B_x = 
   \underset{n\to\infty}{\lim} 
   \, \A^{nk-1}_{f^{-nk+1}x} \circ (\B^{nk-1}_{f^{-nk+1}x})^{-1}.
    $$
So we need to show  that 
$$
\underset{n\to\infty}{\lim}\, (\A^{nk}_x)^{-1} \circ  \B^{nk}_x  = \underset{n\to\infty}{\lim} 
   \, \A^{nk-1}_{f^{-nk+1}x} \circ (\B^{nk-1}_{f^{-nk+1}x})^{-1}. 
$$
Since both limits exist in $\drm$, so does 
   $$
\underset{n\to\infty}{\lim}\,  \B^{nk-1}_{f^{-nk+1}x} \circ (\A^{nk-1}_{f^{-nk+1}x})^{-1} \circ (\A^{nk}_x)^{-1} \circ  \B^{nk}_x   = \underset{n\to\infty}{\lim}\, \B^{nk-1}_{f^{-nk+1}x} \circ (\A^{2nk-1}_{f^{-nk+1}x})^{-1} \circ  \B^{nk}_x, 
$$
and it suffices to prove that this limit equals the identity.
We show that 
$$
 d_0\big( \B^{nk-1}_{f^{-nk+1}x} \circ (\A^{2nk-1}_{f^{-nk+1}x})^{-1} \circ  \B^{nk}_x, \,\Id\big)\to 0 \;\text{ as }n\to\infty.
$$

We recall that  $x\in W^s(p_0)$ and  $f(x)\in W^u(p_0)$. Hence  $ f^{-nk+1}(x)=f^{-nk}(f(x))$ is in  $W^u(p_0)$ for every $n\in\N$. Therefore by \eqref{Anosov def} 
there is a constant $K_x$ such that for all $n\in \N$,
$$ 
\dist (f^{nk}x,\, p_0)< \text{\tiny $\frac12$} K_x\la^{nk} \quad\text{and}\quad
\dist (f^{-nk+1}x, \,p_0)< \text{\tiny $\frac12$} K_x\la^{nk}.
$$
Let $\delta'$ be as in Anosov Closing Lemma \ref{closing}. For all sufficiently large $n$, we have 
$$
\delta_n=\dist (f^{nk}x,f^{-nk+1}x)<K_x \la^{nk} <\delta',
$$
and we can apply the lemma to the 
the orbit segment 
$\{f^i x:  i=-nk+1, \dots , nk\}.$
Thus there exists a periodic point $p_n=f^{2nk-1}(p_n)\,$ such that 
$$
 \dist (f^i (x),\, f^i (p_n))\le c' \delta_n <c'K_x\la^{nk}\quad \text{for }i=-nk+1,\, \dots, nk.
$$
We note that  $\dist(p_n, p_0)\le c''\la^{nk}$.
\vskip.05cm
The rest of the argument is almost identical to the one in Section \ref{Th 1 constr}, where we showed that  
$$
d_0\big( \B^{n}_{f^{-n}x} \circ (\A^{2n}_{f^{-n}x})^{-1} \circ  \B^{n}_x, \,\Id\big)\to 0 \;\text{ as }n\to\infty.
$$
\vskip.1cm 
Therefore $\bar\Psi:X\to \dtm$ is also a conjugacy between $\A$ and $\B$ with $\bar\Psi (p_0)=\Id.$
This completes the proof of Theorem \ref{conjugate  data}.


\section{Proofs of Theorems \ref{Th2} and \ref{Th1}}

The stable and unstable holonomies of the constant cocycle $\B$ are trivial, that is, equal the identity map on $\M$. In Theorem \ref{Th2}, condition \eqref{C^q bunching} is satisfied by the assumption on  $\A$ in $(2)$. 
In Theorem \ref{Th1}, we use the following proposition on cocycles with bounded periodic data.

\begin{proposition} \label{k+1}
{\em (From  Theorem 1.2(ii) in \cite{S19} and beginning of its proof.)}$\;$\label{bdd data}
\vskip.1cm 

 \noindent  Let $(X,f)$ be a hyperbolic system, let $\M$ be a compact 
connected manifold,
 let $k\in \N$,  and let  $\A$ be a bounded $\dkm$-valued cocycle over $f$.
 \vskip.1cm 
 
\noindent  If the periodic data set $\{ \A_p^k:\; p=f^kp,\; p\in X,\; k\in \N \}$ is  bounded in $\donem$, then 
\vskip.1cm 
\noindent {\bf (i)} The set $\{ \A_x^n: \; x\in X, \; k\in \Z\}$ is also bounded in $\donem$,
\vskip.1cm 
\noindent  {\bf (ii)} For each $\sigma>1$ there exists $K>0$ such that for all $x\in X$,

\hskip.5cm $|\A_x^n|_{C^{k+1}}\le K \sigma^{|n|}$ for all $n\in \Z$.

\end{proposition}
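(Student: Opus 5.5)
The plan is to follow the approach of \cite{S19}: first obtain a uniform $C^1$ bound on all iterates from the bound on the periodic data, and then interpolate between $C^1$ and $C^{k+2}$ to get subexponential growth in $C^{k+1}$.

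For part (i), the first step is a closing-lemma argument at the level of first derivatives. Here $\A_x^n$ is a $C^{k+1}$ cocycle with $k+1\ge 2$. Hence the derivative cocycle $D\A$, acting on the bundle $X\times T\M$ over the skew product $F(x,t)=(fx,\A_x(t))$, is Hölder continuous; this uses that $D\A_x$ is $C^k$ in $t$ and Hölder in $x$, obtained by interpolating the Hölder dependence in $C^q$. Suppose the periodic data are bounded in $\donem$. Then $\|D(\A_p^n)^{\pm1}\|\le C$ at every periodic point, uniformly in the base point $t\in\M$.

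To pass from periodic points to arbitrary orbits, I would invoke the argument of \cite{K11} (or \cite{S19}) for linear cocycles with bounded periodic data. Its key ingredient is that the upper Lyapunov exponents of $D\A$ with respect to all ergodic $F$-invariant measures on $X\times\M$ vanish. For a measure projecting to an $f$-invariant measure, this would follow from approximating by periodic orbits via the closing lemma (Lemma \ref{closing}). The obstacle is that $F$ is only partially hyperbolic, so closing happens only in the base. I would handle this by fibered closing: take a generic point with nonzero exponent and close its base orbit to a periodic point $p$. Holonomies of $D\A$ along the fiber orbits are controlled by bunching, which is not yet available. Instead, I would use the standard Kalinin-type estimate for small time, namely an exponent $\chi>0$ in a positive proportion of time, to get $\|D\A_p^n\|$ large, contradicting the bound. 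Once all exponents are zero, uniform subexponential growth $\|D\A^n_x\|\le K_\e e^{\e n}$ follows by semicontinuity. Boundedness then follows by the argument in \cite{S19}; there I would rely on Theorem 1.2(i) there as cited.

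For part (ii), I would use the chain rule for higher derivatives (Faà di Bruno). Write $\A^{n+m}=\A^m_{f^nx}\circ\A^n_x$. This gives a recursive inequality bounding $\|D^j\A^n\|$ by $\|D^j A\|\,\|D\A^{n-1}\|^j$ plus lower-order products. Since the first derivatives are uniformly bounded by (i), the top-order term grows at most like $C n$ in each order. Inductively this gives polynomial growth $\|D^{j}\A^n_x\|\le C_j n^{c_j}$ for $j\le k+1$, and the same holds for inverses. Polynomial growth is dominated by $K\sigma^{|n|}$ for any $\sigma>1$.

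The main obstacle is part (i), specifically the passage from periodic data to all orbits for a cocycle over a non-hyperbolic fiber extension. There I would rely on the cited Theorem 1.2(ii) of \cite{S19}.
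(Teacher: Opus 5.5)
The paper gives no proof of this proposition; it imports it from \cite{S19}. On part (i) you end up in the same place, because everything finally rests on that citation. But the heuristic you sketch before falling back on it is not an argument, and its weak point is not where you locate it. Closing the base orbit of a generic $x$ to a periodic $p$ makes $\A_{f^ix}$ and $\A_{f^ip}$ close. It gives no control of $D\A^n$ along the fiber orbit of $t$ over $x$ versus any fiber orbit over $p$, for three reasons. First, the points $\A^i_x(t)$ and $\A^i_p(t')$ are not shadowed. Second, the base errors near the ends of the segment are of size $\delta$, not exponentially small. Third, those errors are amplified by exactly the fiber expansion you are trying to detect. Without bunching, hence without holonomies, nothing in your sketch turns ``exponent $\chi>0$ a positive proportion of the time'' into a lower bound for $\|D\A^n_p\|$. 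Moreover, zero exponents give only uniform subexponential growth; upgrading that to boundedness in $\donem$ is a separate step, which you again leave to \cite{S19}. (Your opening plan to interpolate with $C^{k+2}$ is unavailable, since $\A$ is only $C^{k+1}$, but you never use it.)

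Part (ii), which you actually argue, fails as written. From $\A^n_x=A(f^{n-1}x)\circ\A^{n-1}_x$, Fa\`a di Bruno gives
\[
\|D^j\A^n_x\|\le \|DA\|\,\|D^j\A^{n-1}_x\|+\|D^jA\|\,\|D\A^{n-1}_x\|^j+(\text{terms with } D^i\A^{n-1}_x,\ 1<i<j).
\]
The coefficient $\|DA\|$ of the top-order term is in general larger than $1$ even when all iterates are uniformly bounded in $C^1$; take $A=h\circ R\circ h^{-1}$ with $R$ an isometry. So this recursion gives only exponential growth, and the bound from (i) on $\|D\A^{n-1}_x\|$ does not affect that term. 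You must unroll before taking norms; schematically,
\[
D^2\A^n_x=\sum_{i=0}^{n-1} D\A^{n-i-1}_{f^{i+1}x}\circ D^2A(f^ix)\,(D\A^i_x\,\cdot\,,\,D\A^i_x\,\cdot\,).
\]
Then use the uniform bound on $\|D\A^m_y\|$ over all $m\in\Z$ and $y\in X$. Induction on $j$ gives $\|D^j\A^n_x\|\le C_j|n|^{j-1}$, and likewise for $(\A^n_x)^{-1}=\A^{-n}_{f^nx}$, since $A^{-1}$ is also bounded in $C^{k+1}$. Equivalently, you can renorm $T\M$ by $\|v\|'_{x,t}=\sup_{m\ge0}\|D_t\A^m_xv\|$, in which each $D\A_x$ is non-expanding. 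Note that the unrolled estimate needs only $\|D\A^m_y\|\le C_\epsilon e^{\epsilon|m|}$, which yields $\|D^j\A^n_x\|\le C|n|^{j-1}e^{j\epsilon|n|}$. So (ii) follows from uniform subexponential growth of the derivative alone, without the boundedness in (i).
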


By the assumption (1$'$), the cocycle $\B$ in the Theorem \ref{Th1} has a periodic data set bounded in $\donem$. Since periodic data conjugacy $\Phi$ is also bounded in $\donem$ by (2$'$), it follows that the periodic data of the cocycle $\A$
is bounded in $\donem$.  Therefore, $\A$ satisfies the conclusions (i) and (ii) of Proposition \ref{bdd data}.  We take $r=\frac12(t+q)$. Then $\A$  has stable holonomies as in Proposition \ref{holonomies}, as well as the unstable holonomies. 
\vskip.1cm 

 Thus in Theorems \ref{Th1} and \ref{Th2}, the cocycle $\A$ has stable and unstable holonomies in $\drm$ as in Proposition \ref{holonomies}. Since the holonomies of $\B$ are trivial, we will write just $H^s_{x,y}$ and $H^u_{x,y}$ for the holonomies  $H^{\A,s}_{x,y}$ and $H^{\A,u}_{x,y}$ of $\A$.

 
 \subsection{Construction of a conjugacy $\Psi$ on the homoclinic set of a fixed point.}

Let ${p_0}$ be a fixed point of $f$.
 As in the proof of Theorem \ref{conjugate data} we conjugate the cocycle $\B$ by $\Phi(p_0)$ and obtain another constant cocycle. So we assume that 
$$
\A_{p_0}=\B_{p_0}=B \quad\text{and}\quad \Phi_ {p_0}=\Id.
$$
We will construct a H\"older continuous conjugacy $\Psi: X\to \dtm$ between $\A$ and the adjusted cocycle with the exponent $\beta(q-r)(r-t)$. Then $\Psi= \tilde \Psi \circ \Phi_ {p_0}$ will be a conjugacy between $\A$ and and the original $\B$ with values in $\dtm$, H\"older continuous with exponent  $\beta(q-r)(r-t)(q-t)$.

\vskip.2cm 
First we define conjugacies $\Psi^s$ and $\Psi^u$ on the stable and unstable leaves of the fixed point $q$. 
We set 
$$
\begin{aligned}
&\Psi^s_{p_0}=\Psi^u_{p_0}=\Phi_{p_0}=\Id,\\
 &\Psi^s_x=H^{s}_{{p_0},x}\quad\text{for }x\in W^s({p_0}),\\
 &\Psi^u_x= H^{u}_{{p_0},x}\quad\text{for }x\in W^u({p_0}).
 \end{aligned}
$$
By the property (H2) of the holonomies we have $\A_x= H^{s}_{fx,fy}\circ \A_y\circ H^{s}_{x,y}$. Using this 
as well as ${p_0}=f({p_0})$ and $\A_{p_0}=B$, we obtain 
$$
\Psi^{s}_{f x} \circ B \circ (\Psi^{s}_x)^{-1}=H^{s/u}_{{p_0},fx} \circ B \circ (H^{s/u}_{f({p_0}),\,x})^{-1}=
H^{s}_{{p_0},f(x)} \circ \A_{p_0} \circ H^{s}_{x,\,{p_0}}=\A_x.
 $$
So $\Psi^s$ is a conjugacy between $\A_x$ and $B$ on $W^s({p_0})$, and similarly $\Psi^u$ is  a conjugacy on $W^u({p_0})$.

Let $x$ be a {\it homoclinic point} for $q$, that is, $x\in W^s({p_0})\cap W^u({p_0})$. We will show that 
$$
\Psi^s_x=\Psi^u_x, \quad\text{that is,} \quad 
(\Psi^s_x)^{-1}\circ \Psi^u_x=H^s_{x,{p_0}}\circ H^u_{x,{p_0}}=\Id.
$$
\vskip.1cm

In Theorem \ref{Th1}, since the diffeomorphisms $\{B^m:\, m\in \N\}$ have uniformly bounded first derivatives and $\M$ is compact, 
the sequence  $\{B^m:\;m\in \N \}$ is uniformly bounded and equicontinuous.  Hence it has a subsequence $\{B^{m_\ell}\}$ that converges uniformly on $\M$ to a continuous map $\tilde B$. 
Then we choose a subsequence $\{m_{\ell_j}\}$ of $\{m_\ell\}$ such that $\{B^{-m_{\ell_j}}\}$ converges  uniformly on $\M$ to a continuous map $\bar B$. In Theorem \ref{Th2}, such a sequence is obtained using pre-compactness of $\{B^m:\, m\in \Z\}$ in $C^0(\M)$.
We have $\tilde B \circ \bar B=\bar B\circ \tilde B=\Id$, and so $\tilde B$ is a homeomorphism.
Thus there is a sequence $\{m_{\ell_j}\}$ such that $\{B^{m_{\ell_j}}\}$ converges to a homeomorphism $\tilde B$ in  $\dzm$. 
It follows that there is an increasing  sequence $\{n_k\}$ of the form $n_k=m_{\ell}-m_{\ell'}$ such that $\{B^{n_k}\}$ converges to the identity map on $\M$ in $\dzm$.

Since $\A_{p_0}=B$, we also have 
$$\A^{n_k}_{p_0}\to \Id \quad\text{in }\dzm.
$$
For this sequence $\{n_k\}$ the holonomies of $\A$ can be written as
$$
H_{x,{p_0}}^{s}=\underset{n\to\infty}{\lim}\,(\A_{p_0}^n)^{-1} \circ \A_x^n\,=\,\underset{k\to\infty}{\lim}\,\, (\A_{p_0}^{n_k})^{-1} \circ \A_x^{n_k}) \,=\, \underset{k\to\infty}{\lim}\, \A_x^{n_k}, 
$$
where convergence is in $\dzm$. Similarly, since ${p_0}=f^{-n_k}({p_0})$, we have
$$
H_{{p_0},x}^{u}\,=\,\underset{k\to\infty}{\lim}\, (\A_x^{-n_k})^{-1}\circ  \A_{p_0}^{-n_k} \,=\,
\underset{k\to\infty}{\lim}\, \A_{f^{-n_k}(x)}^{n_k}\circ \big( \A_{f^{-n_k}({p_0})}^{n_k}\big)^{-1}=\, \underset{k\to\infty}{\lim}\, \A_{f^{-n_k}(x)}^{n_k}. 
$$
Therefore,  with convergence  in $\dzm$, we have
\begin{equation}\label{CsCu}
(\Psi^s_x)^{-1}\circ \Psi^u_x \,=\, H^{s}_{x,{p_0}} \circ H^{u}_{{p_0},x}
=\underset{k\to\infty}{\lim}\, \A_x^{n_k} \circ \A_{f^{-n_k}(x)}^{n_k}=\, \underset{k\to\infty}{\lim}\, \A_{f^{-n_k}(x)}^{2n_k}.
\end{equation}
We will show that $\,\A_{f^{-n_k}(x)}^{2n_k}\to \Id$. 

Let $\delta'$ be as in  the Anosov Closing Lemma \ref{closing}.
We  consider an orbit segment $\{f^{-n_k}x, \dots,x, \dots f^{n_k}(x)\}$.
For all sufficiently large $k$ we have
$$\delta_k= \dist (f^{-n_k}x,f^{n_k}(x))\le K_x \la^{n_k} <\delta'.
$$ Hence there exists a periodic point $p_k=f^{2n_k}(p_k)$ whose orbit $\{p_k, \dots ,f^{2n_k}(p_k)\}$ 
is close to the orbit segment above, and in particular 
$$
\dist (f^{-n_k}x, p_k) \le c' \delta_k \quad \text{ and } \quad \dist ((f^{n_k}x), p_k) \le c' \delta_k. 
$$

Now we show  that $\A_{f^{-n_k}(x)}^{2n_k}$ is close to $\A_{p_k}^{2n_k}$. By the local product structure
there exits a unique point
$$y = W^s_{loc}(f^{-n_k}x)\cap W^u_{loc}(p_k).
$$ 
Then $ \dist (f^{-n_k}x,y) \le c_1 \delta_k$, and the orbit segment $\{y, \dots, f^{2n_k}(x)\}$ 
is close to both orbit segments $\{f^{-n_k}x,\dots, f^{n_k}(x)\}$ and $\{p_k, \dots, f^{2n_k}(p_k)=p_k\}$. 
It follows that  
$$
f^{2n_k}y = W^s_{loc}(f^{n_k}x)\cap W^u_{loc}(p_k),\; \text{ and so }\;\dist (f^{2n_k}y,p_k) \le c_1 \delta_k.
$$
Then applying \eqref{Anx} with $d_{C^0}$
to $f^{-n_k}x$ and $y$,\, and to $f^{2n_k}y$ and $f^{2n_k}p_k=p_k$ we obtain
$$
\begin{aligned}
&d_{C^0} ((\A_{y}^{2n_k})^{-1}\circ \A_{f^{-n_k}(x)}^{2n_k},\,\Id )  \,\le\, c\,\dist (y,f^{-n_k}(x))^{\beta(q-r)} \le c(c_1 \delta_k)^{\beta(q-r)}\quad\; \text{and}\\
&d_{C^0} (\A_{p_k}^{2n_k} \circ(\A_{y}^{2n_k})^{-1},\,\Id ) \, \le\, c(c_1 \delta_k)^{\beta(q-r)}.
\end{aligned}
$$
Now we show that $d_{0} (\A_{f^{-n_k}(x)}^{2n_k} ,\, \A_{p_k}^{2n_k} )\to 0$. We have 
$$
\begin{aligned}
&d_{0} (\A_{f^{-n_k}(x)}^{2n_k} ,\, \A_{p_k}^{2n_k} ) \le 
d_{0} (\A_{f^{-n_k}(x)}^{2n_k} ,\,\A_{y}^{2n_k})+ d_{C^0} (\A_{y}^{2n_k},\,\A_{p_k}^{2n_k})=\\
&d_{0} (\A_{y}^{2n_k}\circ \big (\A_{y}^{2n_k})^{-1}\circ\A_{f^{-n_k}(x)}^{2n_k},\, \A_{y}^{2n_k}\big) 
+d_{0} \big( \A_{y}^{2n_k} ,\, \A_{p_k}^{2n_k}\circ  (\A_{y}^{2n_k})^{-1}\circ \A_{y}^{2n_k} \big )= \\ &d_{0} (\A_{y}^{2n_k}\circ \big (\A_{y}^{2n_k})^{-1}\circ\A_{f^{-n_k}(x)}^{2n_k},\, \A_{y}^{2n_k}\big)  
+d_{0} \big( \Id,\, \A_{p_k}^{2n_k}\circ  (\A_{y}^{2n_k})^{-1}),  
\end{aligned}
$$
and the last term is at most $c(c_1 \delta_k)^{\beta(q-r)}\to 0$ as $k\to\infty$.
\vskip.1cm 
In Theorem \ref{Th1}, 
the iterates of $\A$ have uniformly bounded first derivatives, and so
$$
d_{0} (\A_{y}^{2n_k}\circ \big (\A_{y}^{2n_k})^{-1}\circ\A_{f^{-n_k}(x)}^{2n_k},\, \A_{y}^{2n_k}\big) 
\le c_2 \cdot d_{0} \big((\A_{y}^{2n_k})^{-1}\circ\A_{f^{-n_k}(x)}^{2n_k},\, \Id\big) \le c_4  \delta_k^{\beta(q-r)}.
$$
In Theorem \ref{Th2}, we use the assumption  $(2).$
The condition $\sigma^{2(1+ r)} \la^{\beta(q-r)}<1$ implies $\sigma^{2} \la^{\beta(q-r)}<1$, and 
we estimate 
$$
\begin{aligned}
& d_{0} (\A_{y}^{2n_k}\circ \big (\A_{y}^{2n_k})^{-1}\circ\A_{f^{-n_k}(x)}^{2n_k},\, \A_{y}^{2n_k}\big) 
\le  \|D \A_{y}^{2n_k}\|\cdot d_{0} (\big (\A_{y}^{2n_k})^{-1}\circ\A_{f^{-n_k}(x)}^{2n_k},\, \Id \big)\\
&\le  K \sigma^{2 n_k}\cdot  c(c_1 \delta_k)^{\beta(q-r)}
\le K \sigma^{2 n_k}\cdot  c_4 \la^{n_k \beta(q-r)}\le c_5 (\sigma^{2}\la^{\beta(q-r)})^{n_k} \to 0\, \text{ as }k\to\infty.
\end{aligned}
$$
Thus in both settings we have 
$$
d_{0} (\A_{f^{-n_k}(x)}^{2n_k} ,\, \A_{p_k}^{2n_k} ) \to 0 \quad\text{as $k\to\infty$.}
$$

We recall that $\A_{p_k}^{2n_k}=\Phi(p_k)^{-1} \circ B^{2n_k}\circ\Phi(p_k)$, and that $ B^{2n_k}\to \Id$ in $C^0$ since $ B^{n_k}\to \Id$ in $C^0$. We have 
$$
d_{0} (\A_{p_k}^{2n_k} , \, \Id) \,=\, d_{0}(\Phi(p_k)^{-1} \circ B^{2n_k}\circ\Phi(p_k),\,\Id) 
\,=\, d_{0} (\Phi(p_k)^{-1} \circ  B^{2n_k},\,\Phi(p_k)^{-1} ).
$$
In Theorem \ref{Th1}, since the set $\{\Phi(p)\}$ is bounded in $\donem$, this is at most 
$c_6\, d_{0}( B^{2n_k},\Id)$, which tends to zero.  In Theorem \ref{Th2}, the family of maps $\{\Phi(p)^{-1}\}$ is pre-compact in $C^0$ and hence equicontinuous, and it follows that 
$$d_{0} (\Phi(p_k)^{-1} \circ  B^{2n_k},\,\Phi(p_k)^{-1} )\to 0.
$$
Thus 
$$
d_{0} (\A_{p_k}^{2n_k} , \, \Id)\to 0\quad\text{and hence}\quad\A_{f^{-n_k}(x)}^{2n_k}\to \Id,
$$  
and using \eqref{CsCu} we conclude that 
$$
(\Psi^s_x)^{-1}\circ \Psi^u_x  \,=\, \underset{k\to\infty}{\lim} \, \A_{f^{-n_k}(x)}^{2n_k}= \Id, \quad\text{that is,}\quad \Psi^s_x=\Psi^u_x.
$$

We denote the set of homoclinic points for the fixed point ${p_0}$ by $S$.  For each $x\in S$ we define
$$
\Psi_x =\Psi^s_x=\Psi^u_x,
$$
which  gives a conjugacy between $\A$ and $\B$ on $S$. 
Since $\Psi_x=H^{s}_{{p_0},x}$, and the holonomies are in $\drm$, we have  $\Psi_x \in \drm$. 
\vskip.3cm

{\bf Extending the conjugacy $\Psi$ to $X$}  is done as in the proof of Theorem \ref{conjugate data}, but the estimates are simpler since the holonomies of $\B$ are trivial.



\vskip.5cm
\begin{thebibliography}{99}

\bibitem[AKL18]{AKL} A. Avila, A. Kocsard, and X. Liu.
{\em Liv\v{s}ic  theorem for diffeomorphism cocycles}.
Geometric and Functional Analysis 28 (2018), 943-964.

\bibitem[BK16]{BK} L. Backes and A. Kocsard. 
{\em Cohomology of dominated diffeomorphism-valued cocycles over hyperbolic systems.} 
Ergodic Theory  Dynam. Systems (2016), 36, 1703-1722.

\bibitem[Bo]{Bo} R. Bowen.  {\em Equilibrium States and the Ergodic Theory of Anosov Diffeomorphisms. }Lecture Notes in Mathematics, Vol. 470, Springer-Verlag, 1975.

\bibitem[dlLO99]{dlLO}  R. de la Llave and R. Obaya.
{\em Regularity of the composition operator in spaces of H\"older functions.}
Discrete and Continuous Dynamical Systems, 5 (1999), no. 1, 157-184.

\bibitem[dlLW10]{LW}  R. de la Llave and A. Windsor. 
              {\em Liv\v{s}ic theorem for non-commutative groups including groups  
              of diffeomorphisms, and invariant geometric structures.} 
               Ergodic  Theory   Dynam. Systems, 30, no. 4  (2010),  1055-1100.      

\bibitem[GG17]{GG} G. Grabarnik and M. Guysinsky.  {\it Liv\v{s}ic theorem for Banach rings.}
  Discrete and Continuous Dynamical Systems, Vol. 37, Issue 8 (2017), 4379-4390.
\vskip.05cm

\bibitem[K11]{K11} B. Kalinin. {\em Liv\v{s}ic theorem for matrix cocycles.} 
Annals of Mathematics, 173 (2011), no. 2, 1025-1042.

\bibitem[Liv71]{Liv1} A. N. Liv\v{s}ic. 
            {\em Homology properties of Y-systems.}  Math. Zametki 10, 758-763, 1971.

\bibitem[Liv72]{Liv2} A. N. Liv\v{s}ic.  {\em Cohomology of dynamical systems.} 
                      Math. USSR Izvestija 6, 1278-1301, 1972. 
      

\bibitem[KS26]{KS26} B. Kalinin and   V. Sadovskaya. {\it Periodic data rigidity for cocycles and hyperbolic automorphisms.} Preprint.

\bibitem[KtH]{KtH}  A. Katok and B. Hasselblatt. 
              {\em Introduction to the modern theory of dynamical systems.}  \\
              Encyclopedia of Mathematics and  its Applications
              {\bf 54}, Cambridge University Press,  1995.


\bibitem[KP16]{KP} A. Kocsard and R. Potrie.  {\em Liv\v{s}ic theorem for low-dimensional diffeomorphism cocycles}, Comment. Math. Helv. 91 (2016), 39-64.

              
              \bibitem[Pa99]{Pa99} W. Parry.
           {\em The Liv\v{s}ic periodic point theorem for non-Abelian cocycles.}
         Ergodic  Theory  Dynam. Systems, 19(3), 687-701, 1999.
              
\bibitem[S13]{S13}   V. Sadovskaya. {\em Cohomology of
               $GL(2,\R)$-valued cocycles over hyperbolic systems.}
               Discrete and Continuous Dynamical Systems, vol. 33, no. 5 (2013), 2085-2104.
              
 \bibitem[S15]{S15} V. Sadovskaya. {\em Cohomology of  fiber bunched cocycles over hyperbolic systems.}
Ergodic Theory  Dynam. Systems, Vol. 35, Issue 8 (2015), 2669-2688.     

\bibitem[S19]{S19}   V. Sadovskaya.  {\it Boundedness and invariant metrics for diffeomorphism cocycles over hyperbolic systems.} 
Geometriae Dedicata, Vol. 202, Issue 1 (2019), 401-417.  

\bibitem[S22]{S22}   V. Sadovskaya.   {\it Diffeomorphism cocycles over partially hyperbolic systems.} 
Ergodic Theory  Dynam. Systems.  Vol. 42, Issue 1 (2022), 263-286. 



\bibitem[Sch99]{Sch} K. Schmidt.
 {\em Remarks on Liv\v{s}ic theory for non-Abelian cocycles.}
      Ergodic Theory Dynam. Systems, 19(3), 703-721, 1999.


\end{thebibliography}
\end{document}